%
%
%
%
%
%
%
%
%

\documentclass[a4paper, 11pt, english]{article}
\usepackage{babel}
\usepackage[latin1]{inputenc}
\usepackage[T1]{fontenc}
\usepackage{amsmath}

 \usepackage{algorithmic}
\usepackage{algorithm}
\usepackage{graphicx}
\usepackage{amssymb}
\usepackage{amsthm}
\usepackage{subcaption}
%
%

\graphicspath{{figures/}}

\newtheorem{thm}{Theorem}

\newtheorem{lem}[thm]{Lemma}

\newtheorem{remark}[thm]{Remark}
\newtheorem{proposition}[thm]{Proposition}

\newcommand{\ee}{{\rm e}\hspace{1pt}}
\newcommand{\ii}{\text{i}\hspace{1pt}}
\newcommand{\dd}{\hspace{1pt}{\rm d}\hspace{0.5pt}}

\newcommand{\scs}{\scriptstyle}

\def\svdots{\vbox{\baselineskip=1.5pt\lineskiplimit=0pt
        \kern1.5pt \hbox{$\scs .$}\hbox{$\scs .$}\hbox{$\scs .$}}}

\def\sddots{\mathinner{\raise3pt\vbox{\hbox{$\scs .$}}
    \raise1.5pt\hbox{$\scs .$}\hbox{$\scs .$}}}

\newcommand{\R}{{\mathbb R}}
\newcommand{\C}{{\mathbb C}}

\newcommand{\Cn}{{\C^n}}

\newcommand{\Cnxn}{{\C^{n\times n}}}

\newcommand{\sgn}{\operatorname{sgn}}

\newcommand{\Span}{\operatorname{span}}

\newcommand{\half}{{\frac{1}{2}}}
\newcommand{\thalf}{\tfrac{1}{2}}
\newcommand{\itext}[1]{{\qquad\text{#1}\qquad}}
\newcommand{\abs}[1]{\left| #1 \right|}

\newcommand{\norm}[1]{\left\Vert #1 \right\Vert}

\newcommand{\inprod}[1]{\left\langle #1 \right\rangle}

\newcommand{\sarr}[1]{ \begin{smallmatrix} #1 \end{smallmatrix}}
\newcommand{\smat}[1]{ \left[\sarr{#1}\right]}

\newcommand{\bmat}[1]{ \begin{bmatrix}#1\end{bmatrix}}

\let\tilde\widetilde
\let\hat\widehat

\let\bs\boldsymbol

\def\bphi{{\bs{\varphi}}}
\def\bpsi{{\bs{\psi}}}
\def\bxi{{\bs{\xi}}}
\def\bzeta{{\bs{\zeta}}}

\def\mathbi#1{\textbf{\em #1}}

\renewcommand{\vec}[1]{{\mathbi{#1}}}
\newcommand{\mat}[1]{{\mathbi{#1}}}
 
\def\bc{\vec{c}} 
\def\e{\vec{e}} 
\def\f{\vec{f}}

\def\p{\vec{p}} 
\def\q{\vec{q}} 
\def\r{\vec{r}}

\def\u{\vec{u}} 
\def\v{\vec{v}}
\def\w{\vec{w}} 
\def\x{\vec{x}}
\def\y{\vec{y}} 
\def\z{\vec{z}}

\def\A{\mat{A}} 
 
\def\D{\mat{D}} 
 
\def\F{\mat{F}} 
 
\def\H{\mat{H}} 
\def\I{\mat{I}} 
\def\J{\mat{J}}

\def\M{\mat {M}}
 
\def\Q{\mat {Q}} 

\def\S{\mat {S}} 
\def\U{\mat {U}} 
\def\V{\mat {V}} 
\def\W{\mat {W}}

\def\Rtn{\R^{2n}}

%
\begin{document}

\title{Krylov integrators for Hamiltonian systems}





\author{Timo Eirola and Antti Koskela \vspace{5mm} \\ In memory of Timo  Eirola  (1951--2016)  
}


\date{}

\maketitle

\begin{abstract}We consider Arnoldi like processes to obtain symplectic
subspaces for Hamiltonian systems. Large systems are locally approximated 
by ones living in low dimensional subspaces; we especially consider Krylov 
subspaces and some extensions. This will be utilized
in two ways: solve numerically local small dimensional systems or in a
given numerical, e.g. exponential, integrator, use the subspace for
approximations of necessary functions. In the former case one can expect an
excellent energy preservation. For the latter this is so for linear systems.
For some second order exponential integrators we consider these two approaches
are shown to be equivalent.
In numerical experiments with nonlinear Hamiltonian
problems their behaviour seems promising.

\end{abstract}

\section{Introduction} 

Symplectic methods have shown to be very effective in long time
integration of Hamiltonian systems (see \cite{Hairer_Lubich_Wanner}). 
Many of them are implicit and necessitate the solution of systems of equations. If the differential
equation system is large and sparse, a natural approach is to use
Krylov subspace techniques to approximate solution of the algebraic
equations.

A related approach is to use Krylov approximations of the matrix exponential
in the so-called exponential integrators (see \cite{Hochbruck_Lubich_Selhofer}).
This has turned out to be a superior technique for
many large systems of ordinary differential equations.

Krylov subspace techniques can be viewed as local low dimensional approximations
of the large system. For Hamiltonian systems the standard Arnoldi type iterations
produce low dimensional systems that are no longer Hamiltonian.
In this paper special attention is paid to produce subspaces with symplectic
bases. Also the time symmetry of the Hamiltonian systems taken into account
when producing the bases.

This is an extended version of the slides by Eirola, presented at a Workshop on 
Exponential Integrators in Innsbruck in 2004 (see~\cite{Eirola}). Part of the material
was introduced in the master's thesis of the second author~\cite{Koskela} which was supervised by Eirola.
The original ideas of Eirola came from considering linear 
Hamiltonian systems in $\,\R^{2n}\,$ as $\,\R\,$--linear\footnote{
These are of the form: $\,\z\mapsto\M\z+\M_{\scriptscriptstyle\#}\overline\z\,$ for
$\,\M,\M_{\scriptscriptstyle\#}\in\Cnxn\,$,} systems in $\,\Cn\,$
(see~\cite{Eirola2}). The slides~\cite{Eirola} were using that language, but the present 
version is written in a more standard form.

\section{Hamiltonian systems}
Given a smooth function $\,H\;:\;\Rtn\to\R\,$, consider the Hamiltonian system
\begin{equation}
  \label{eq:Ham_sys}
 \x'(t)=\J^{-1}\,\nabla H(\x(t))\ ,\qquad \x(0)=\x_0\ ,
\end{equation}
where $\J=\smat{0&I\\-I&0}$. A matrix 
$\,\A\in\R^{2n\times 2n}\,$ is called Hamiltonian, if $\,\A^T=\mat{JAJ}\,$, and
symplectic, if $\,\A^T\J\A=\J\,$. The Jacobian of
$\,\nabla H(\x)\,$ is a symmetric matrix at every point. Thus
$\,D_{\x}\,\J^{-1}\,\nabla H(\x)\,$ is a Hamiltonian matrix. 
 
We assume that \eqref{eq:Ham_sys} has a unique solution and write it
$\,\x(t)=\bphi^t(\x_0)\,$. Then
\begin{itemize}\itemsep3pt
\item Energy is preserved: $\,H(\x(t))\,$ is constant in $\,t\,$. 
\item For every $\,t\,$ the mapping $\,\bphi^t\;:\;\Rtn\to\Rtn\,$ 
is symplectic (or canonical), that is, its derivative is a symplectic 
matrix at every point. 
\item The mapping $\,\bphi^t\,$ is time symmetric, i.e. $\,\bphi^{-t}(\x(t))=\x_0\,$ for every $\,t\,$.
\end{itemize}

Symplectic integrators produce symplectic one step maps
for Hamiltonian systems (see \cite{Hairer_Lubich_Wanner}).
For example, the implicit midpoint rule
\[\x_{j+1}=\x_j+h\,\J^{-1}\,\nabla H((\x_j+\x_{j+1})/2)\]
is such. For linear systems, i.e., when $\,H\,$ is of the form
$\, H(\x)=\thalf\,\x^T\S\,\x+\vec c^T\x\,$, the energy is also
preserved in the numerical solution with this and many other
symplectic methods. One step methods are called symmetric the map given
by the integrator is time symmetric, i.e. chaning $\,h\,$ 
to $\,-h\,$ is equivalent to switching $\,\x_j\,$ and  $\,\x_{j+1}\,$.
The implicit midpoint rule, for example, is symmetric.

For large systems implicit methods may become expensive. 
In this paper we will consider several low dimensional Hamiltonian 
approximations and the use of implicit methods or exponential
integrators for these.

\section{Symplectic subspaces and low dimensional approximations}

Recall some basic definitions and properties in $\,\Rtn\,$.
Denote the nondegenerate skew-symmetric bilinear form $\,\omega(\x,\y):=\x^T\J\y$.
A subspace $\,V\,$ is \emph{isotropic}, if $\,\omega(\x,\y)=0\,$ for all
$\,\x,\y\in V\,$, and a subspace $\,W\,$ is \emph{symplectic}, if for 
every nonzero $\,\x\in W\,$ there exists $\,\y\in W\,$ such that 
$\,\omega(\x,\y)\not=0\,$. Then the dimension of $\,W\,$ is even.
A basis $\,\e_1,\dots,\e_k,\f_1,\dots,\f_k\in W\,$ is called symplectic, or a \it Darboux basis\rm, if 
for all $\,i,j=1,\dots,k\,$ holds $\,\omega(\e_i,\e_j)=\omega(\f_i,\f_j)=0\,$ and 
$\,\omega(\e_i,\f_j)=\delta_{i,j}\,$.

If $\,V\,$ is an isotropic subspace with an orthonormal basis 
$\,\e_1,\dots,\e_k\,$, then $\,\e_1,\dots,\e_k\,$ are also $\,\omega\,$--orthogonal
and $\,W=V\oplus\J V\,$ is a symplectic subspace and 
$\,\e_1,\dots,\e_k,\,$ $\J^{-1}\e_1,\dots,\J^{-1}\e_k\,$
is a symplectic basis of $\,W\,$. 

We call also a matrix $\,\U\in\R^{2n\times2k}\,$ symplectic, if
(pointing out the dimensions) $\,\U^T\J_n\U=\J_k\,$. 
Then $\,\U^\dagger=\J_k^{-1}\U^T\J_n\,$
is a left inverse of $\,\U\,$ if and only if $\U$ is symplectic.

We will consider local approximations of the Hamiltonian system
\[\x'(t)=\f(\x(t))=\J^{-1}\nabla H(\x(t))\ .\]
Assume that at a point $\,\x_0\in\Rtn\,$ we are given a symplectic 
matrix $\,\U\in\R^{2n\times2k}\,$.
Consider the Hamiltonian system in $\,\R^{2k}\,$ corresponding to the function
$\,\eta(\bxi)=H(\x_0+\U\bxi)\,$. Then we get
\[\bxi'=\J^{-1}\,\nabla\eta(\bxi)=\J^{-1}\,\U^T\nabla H(\x_0+\U\bxi)\ ,\]
which is Hamiltonian in $\,\R^{2k}\,$. Set $\,\U^\dagger=\J^{-1}\U^T\J\,$. Then
\begin{equation}
  \label{eq:Ham_sys_s}
\bxi'(t)=\U^\dagger\f(\x_0+\U\bxi(t))\ .
\end{equation}

One strategy is to solve \eqref{eq:Ham_sys_s} numerically 
from $\,\bxi_0=0\,$ up to $\,\bxi_1\approx\bxi(t_1)\,$ 
and set $\,\x_1=\x_0+\U\bxi_1\,$. 
Clearly, if we use an energy preserving scheme for the system \eqref{eq:Ham_sys_s}, 
we will conserve the energy of the large system too, i.e. 
$\,H(\x_1)=H(\x_0)\,$. 

Note that if the sets of constant energy of the original system are bounded, 
then they are such for the small dimensional approximations too. 
This implies that the approximations inherit stability of equilibria
in a natural way.

In case that at $\,\x_0\,$ we are given a matrix $\,\U\,$ with orthonormal columns
we set $\,\U^\dagger=\U^T\,$ in \eqref{eq:Ham_sys_s}. Then the system is not 
necessarily Hamiltonian.

We will consider also another strategy which is, instead of solving low-dimesional 
systems, we approximate suitable functions of a numerical method
in the low dimensional space $\,\R^{2k}\,$. As we will see, for the exponential integrators we consider
these two approaches are equivalent.

The idea of approximating a Hamiltonian system by another of smaller dimension
is not new. See, for example the discussion in \cite{Lubich}.
A novelty here is to use local (later Krylov) approximations.
 
If $\,\U\,$ is symplectic and does not depend on $\,\x_0\,$, then it is not difficult 
to prove, for example, that using the
implicit midpoint rule for \eqref{eq:Ham_sys_s} induces a map 
$\,\bpsi\;:\;\x_0\to\x_1\,$ that is symplectic in $\,R(\U)\,$,
that is
\[\omega(\D\bpsi(\x_0)\vec d\,,\,\D\bpsi(\x_0)\tilde{\vec d})
=\omega(\vec d,\tilde{\vec d})\itext{for all}\vec d,
\tilde{\vec d}\in R(\U)\ .\]
But in order to get efficient algorithms we let 
$\,\U\,$ to depend on $\,\x_0\,$ and then this approach generally 
does not produce a symplectic map.

\section{Exponential integrators} \label{Sec:Integrators}

The use of approximations of the matrix exponential as a part of time propa\-gation
methods for differential equations has turned out to be very effective 
(see e.g.~\cite{Hochbruck_Lubich_Selhofer}).
We will consider application of three second order exponential integrators to the Hamiltonian system \eqref{eq:Ham_sys}.
In what follows, the matrix $\H$ will denote the Jacobian of the right hand side of \eqref{eq:Ham_sys} at $\x_0$, i.e.,
$\H = D \f(\x_0)$. The methods can be seen as exponential integrators applied to
semilinear equations which are local linearizations of \eqref{eq:Ham_sys}. 
In the literature methods of this type are also called \it exponential Rosenbrock methods\rm~\cite{Hochbruck_Ostermann}.

\subsection{Exponential Euler method} \label{subsec:EE}

As a first method we consider the exponential Euler method (EE)  
\footnote{We use the
shorthand notation $\,\x=\x_j\,,\ \x_+=\x_{j+1}\,$ etc.} 
\begin{equation}
  \label{eq:exp_E}
  \x_+=\x+h\,\phi(h\,\H)\,\f(\x)\ ,
\end{equation}
where $\,\phi(z)=\int_0^1 e^{tz}\,dt=z^{-1}\,(e^z-1)\,$ and $\,\H= D\f(\x)\,$.

Note that if $\,\f\,$ is linear, then $\,\x_+=\bphi^h(\x)\,$, i.e., 
the method gives exact values of the solution. 

Assume now that the system $\,\x'=\f(\x)\,$ is Hamiltonian in $\,\Rtn\,$ and
$\,\U\in\R^{2n\times2k}\,$ is symplectic. Then $\,\H=\D\f(\x)\,$
is a Hamiltonian matrix as well as $\,\F=\U^\dagger\H\U\,$ for $\,\U^\dagger=\J^{-1}\U^T\J\,$.

If we use the exponential Euler method \eqref{eq:exp_E} for the low dimensional system
\eqref{eq:Ham_sys_s} we produce
\begin{equation}
  \label{eq:exp_Euler}
  \x_+=\x+h\,\U\,\phi(h\,\F)\,\U^\dagger\f(\x)\ .
\end{equation}
For linear problems this will preserve energy exactly:
\begin{lem}\label{eEuler} Assume the system is of the form
$\,\f(\x)=\J^{-1}\,\nabla H(\x)=\H\x+\vec c\,$. 
Then the exponential Euler method \eqref{eq:exp_Euler} preserves energy,
i.e., $\,H(\x_+)=H(\x)\,$.  
\end{lem}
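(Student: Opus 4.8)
The plan is to read \eqref{eq:exp_Euler} not as a formula to be expanded, but as the exponential Euler method \eqref{eq:exp_E} applied to the reduced system \eqref{eq:Ham_sys_s}, and then to invoke two facts already in hand: that the reduced system is Hamiltonian with energy $\eta(\bxi)=H(\x+\U\bxi)$, and that \eqref{eq:exp_E} is exact on linear systems. First I would specialise \eqref{eq:Ham_sys_s} to the affine right-hand side $\f(\x)=\H\x+\vec c$. Since $\U$ is symplectic, the discussion preceding \eqref{eq:Ham_sys_s} tells us that $\bxi'=\U^\dagger\f(\x+\U\bxi)$ is Hamiltonian in $\R^{2k}$ with energy $\eta(\bxi)=H(\x+\U\bxi)$; substituting the affine $\f$ turns it into the affine-linear system $\bxi'=\F\bxi+\vec u$, with $\F=\U^\dagger\H\U$ and $\vec u=\U^\dagger\f(\x)$. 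This remains Hamiltonian (indeed $\F$ is a Hamiltonian matrix and $\eta$ is its quadratic Hamiltonian), so its exact flow preserves $\eta$.

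Next I would identify the numerical step with an exact flow value. Starting the reduced system from $\bxi_0=0$, formula \eqref{eq:exp_E} produces $\bxi_1=h\,\phi(h\F)\,\vec u$ and hence $\x_+=\x+\U\bxi_1$, which is precisely \eqref{eq:exp_Euler}. Because \eqref{eq:exp_E} reproduces the exact solution for linear systems — as noted immediately after its definition, and as is easily checked to persist with the constant term $\vec c$ — we have $\bxi_1=\bxi(h)$, the value of the exact flow of the reduced system. Energy preservation along Hamiltonian flows then gives $\eta(\bxi_1)=\eta(\bxi_0)=\eta(0)$, and translating back, $H(\x_+)=H(\x+\U\bxi_1)=\eta(\bxi_1)=\eta(0)=H(\x)$, as claimed.

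The delicate points, and thus where I would be most careful, are that the reduction genuinely stays Hamiltonian (which needs $\U$ symplectic, $\U^\dagger=\J_k^{-1}\U^T\J_n$, so that $\J_k\F=\U^T(\J_n\H)\U$ is symmetric) and that the phrase \emph{exact for linear} is read to include the affine constant $\vec c$. A more computational route is also available and I would keep it in reserve: expand $H(\x_+)-H(\x)=\nabla H(\x)^T\Delta+\thalf\,\Delta^T\S\Delta$ with $\Delta=h\,\U\,\phi(h\F)\,\U^\dagger\f(\x)$, $\S=\J\H$ and $\nabla H(\x)=\J\f(\x)$, then use the symplectic identities $\U^T\J_n=\J_k\U^\dagger$ and $\J_n\U=(\U^\dagger)^T\J_k$ to rewrite both terms through $\vec u=\U^\dagger\f(\x)$, arriving at $\vec g^T\Delta=-h\,\vec u^T\J_k\,\phi(h\F)\,\vec u$ and $\thalf\Delta^T\S\Delta=\tfrac{h^2}{2}\,\vec u^T\phi(h\F)^T\J_k\F\,\phi(h\F)\,\vec u$.

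The whole thing then collapses to the single scalar identity $-\,\vec u^T\J_k\,\phi(h\F)\,\vec u+\thalf\,h\,(\phi(h\F)\vec u)^T\J_k\F\,\phi(h\F)\,\vec u=0$, which holds because $\J_k\F$ is symmetric while $\J_k$ is skew. The obstacle in that direct approach is exactly taming the $\phi(h\F)$ factors and confirming this cancellation; the conceptual argument above is what makes that grind unnecessary, so I would present the conceptual proof and relegate the computation to a remark.
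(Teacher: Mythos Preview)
Your main argument is correct and is exactly the paper's proof: specialise the reduced system \eqref{eq:Ham_sys_s} to the affine case, observe that exponential Euler is exact there so $\bxi_1=\bxi(h)$, and read off $H(\x_+)=\eta(\bxi(h))=\eta(0)=H(\x)$ from energy conservation of the small Hamiltonian flow.

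One caution about the computational backup you propose to relegate to a remark: the scalar identity you write down is true (it has to be, by the conceptual argument), but the justification ``because $\J_k\F$ is symmetric while $\J_k$ is skew'' does not do the job. Neither term vanishes on its own; in particular $\J_k\phi(h\F)$ is \emph{not} skew-symmetric---using $\F^T=\J_k\F\J_k$ one finds $(\J_k\phi(h\F))^T=-\J_k\phi(-h\F)$, not $-\J_k\phi(h\F)$. The cancellation between the two terms genuinely relies on the functional relation $z\phi(z)=e^z-1$ together with the symplecticity of $e^{h\F}$ (equivalently, on the conceptual proof), so either drop the remark or supply that computation honestly.
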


\begin{proof} The local problem now is (see \eqref{eq:Ham_sys_s})
\[\bxi'(t)=\F\,\bxi(t)+\U^\dagger\,(\H\x+\vec c)\ ,\qquad \bxi(0)=0\ .\]
Then $\,\bxi_+=\bxi(h)=h\,\phi(h\,\F)\,\U^\dagger\,(\H\x+\vec c)\,$, 
i.e., the exponential Euler approximation gives the exact solution 
for the problem in $\,\R^{2k}\,$. Hence
the energy is preserved in the small system and consequently also
for $\,\x_+=\x+\U\,\bxi(h)\,$.   
\end{proof}

\subsection{Explicit exponential midpoint rule} \label{subsec:EEMP}

We consider next the explicit exponential midpoint rule (see~\cite{Hochbruck_Lubich_Selhofer})
\begin{equation}
  \label{eq:eemp}
  \x_+=\x+e^{h\H}(\x_--\x)+2h\,\phi(h\H)\,\f(\x)\ ,
\end{equation}
where $\,\H=\D\f(\x)\,$.
For linear Hamiltonian problems $\,\f(\x)=\H\x+\vec c\,$ this gives
\begin{align*}
  \x_+&=\x+e^{h\H}(\x_--\x)+2\,(e^{h\H}-\I)\,\x+2h\,\phi(h\H)\,\vec c\\
&=e^{h\H}(\x_-+\x)-\x+2h\,\phi(h\H)\,\vec c\ ,
\end{align*}
i.e., $\,\half(\x_++\x)=e^{h\H}\half(\x+\x_-)+h\,\phi(h\H)\,\vec c=\tilde\x(h)\,$,
where $\,\tilde\x\,$ is the solution of $\,\tilde\x'(t)=\J^{-1}\,\nabla H(\tilde\x(t))\,$,
$\,\tilde\x(0)=\thalf(\x+\x_-)\,$. Hence the energy of
the \emph{averages} is preserved:
\begin{equation}
  \label{eq:eemp_energy}
  H(\thalf(\x_++\x))=H(\thalf(\x+\x_-))\ .
\end{equation}
\begin{remark}
  In~\cite{Eirola3} it was noticed that the explicit midpoint rule 
(for the homogeneous problem)
\[\x_+=\x_-+2h\,\H\x\]
preserves another quantity: $\,\omega(\H\x,\x_+)=\omega(\H\x_-,\x)\,$.
Equation \eqref{eq:eemp_energy} implies this, too.
\end{remark}

Again we approximate \eqref{eq:eemp} with
\begin{equation}
  \label{eq:eemp_K}
  \x_+=\x+\U\,e^{h\F}\,\U^\dagger(\x_--\x)+
2h\,\U\,\phi(h\F)\,\U^\dagger\f(\x)\ .
\end{equation}
For this we have
\begin{thm} Let $\,\f\,$ be linear, $\,\U\in\R^{2n\times2k}\,$
symplectic, and assume that $\,\x-\x_-\,$ is in the range of $\,\U\,$. 
Then \eqref{eq:eemp_energy} holds for the scheme \eqref{eq:eemp_K}.
\end{thm}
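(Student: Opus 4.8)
The plan is to deduce \eqref{eq:eemp_energy} for the scheme \eqref{eq:eemp_K} from the identity already verified for the full rule \eqref{eq:eemp}, by recognizing \eqref{eq:eemp_K} as that full rule applied to the reduced Hamiltonian system \eqref{eq:Ham_sys_s} written in coordinates centred at $\,\x\,$. Since $\,\U\,$ is symplectic, $\,\U^\dagger\U=\I\,$, so $\,\U\U^\dagger\,$ restricts to the identity on $\,R(\U)\,$. I attach to the base point $\,\x\,$ a reduced variable $\,\bxi\,$ through the correspondence $\,\x+\U\bxi\,$, so that $\,\bxi=0\,$ represents $\,\x\,$, and set $\,\bxi_-=\U^\dagger(\x_--\x)\,$ and $\,\bxi_+=\U^\dagger(\x_+-\x)\,$. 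Applying $\,\U^\dagger\,$ to \eqref{eq:eemp_K} and using $\,\U^\dagger\U=\I\,$ collapses the inner factors and gives
\[\bxi_+=e^{h\F}\,\bxi_-+2h\,\phi(h\F)\,\U^\dagger\f(\x)\ .\]

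Next I note that, $\,\f\,$ being linear, the reduced vector field is $\,\U^\dagger\f(\x+\U\bxi)=\F\,\bxi+\U^\dagger\f(\x)\,$, which by \eqref{eq:Ham_sys_s} is the affine linear Hamiltonian field of $\,\eta(\bxi)=H(\x+\U\bxi)\,$ on $\,\R^{2k}\,$, with $\,\F\,$ Hamiltonian. Hence the displayed recursion is exactly the full rule \eqref{eq:eemp} for this reduced system, with current point $\,\bxi=0\,$ and previous point $\,\bxi_-\,$. Running the computation that precedes \eqref{eq:eemp_energy} verbatim in $\,\R^{2k}\,$, with $\,(\H,\vec c,\x,\x_-,\x_+)\,$ replaced by $\,(\F,\U^\dagger\f(\x),0,\bxi_-,\bxi_+)\,$, then yields $\,\thalf\bxi_+=\tilde\bxi(h)\,$, the exact reduced flow started from $\,\thalf\bxi_-\,$, and therefore $\,\eta(\thalf\bxi_+)=\eta(\thalf\bxi_-)\,$.

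Finally I would translate this reduced-energy equality back to $\,H\,$. From \eqref{eq:eemp_K} one has $\,\x_+-\x\in R(\U)\,$ automatically, so $\,\U\bxi_+=\x_+-\x\,$ and $\,\eta(\thalf\bxi_+)=H(\x+\thalf\U\bxi_+)=H(\thalf(\x+\x_+))\,$. The hypothesis $\,\x-\x_-\in R(\U)\,$ enters precisely here: it gives $\,\U\bxi_-=\U\U^\dagger(\x_--\x)=\x_--\x\,$, hence $\,\eta(\thalf\bxi_-)=H(\thalf(\x+\x_-))\,$, and combining the three equalities produces \eqref{eq:eemp_energy}. I expect this last translation to be the only delicate point: without $\,\x-\x_-\in R(\U)\,$ the projection $\,\U\U^\dagger\,$ would distort the previous iterate, the reduced midpoint $\,\thalf\bxi_-\,$ would fail to correspond to $\,\thalf(\x+\x_-)\,$, and the otherwise automatic reduction would break at the $\,\x_-\,$ end, so the range hypothesis is exactly what keeps the correspondence faithful at both ends.
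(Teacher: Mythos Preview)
Your argument is correct. The organization differs from the paper's, though the underlying mechanism is the same. The paper stays in $\R^{2n}$: writing $\hat\x_+=\thalf(\x_++\x)$ and $\hat\x=\thalf(\x+\x_-)$ and using $\x-\x_-\in R(\U)$ to set $\thalf(\x-\x_-)=\U\bzeta$, it manipulates \eqref{eq:eemp_K} together with $z\phi(z)=e^z-1$ to show directly that
\[
\hat\x_+=\hat\x+h\,\U\,\phi(h\F)\,\U^\dagger(\H\hat\x+\vec c)\,,
\]
i.e.\ the midpoints advance by the approximate exponential Euler step \eqref{eq:exp_Euler}, and then invokes Lemma~\ref{eEuler}. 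You instead project to $\R^{2k}$ at the outset, identify the resulting recursion as the \emph{full} EEMP rule \eqref{eq:eemp} for the reduced linear Hamiltonian system, and reuse the computation preceding \eqref{eq:eemp_energy} verbatim before lifting back via $\eta(\bxi)=H(\x+\U\bxi)$. Your route makes the role of the range hypothesis especially transparent (it is exactly what lets $\U\bxi_-$ reproduce $\x_--\x$ when you lift), while the paper's route has the advantage of producing an explicit $\R^{2n}$ identity for the midpoints that plugs straight into Lemma~\ref{eEuler}. Either way the crux is the cancellation $\I-e^{h\F}+h\,\phi(h\F)\,\F=0$, which you invoke implicitly by citing the earlier EEMP computation and which the paper writes out.
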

\begin{proof} Now $\,\U^\dagger\U=\I\,$. 
Write $\,\hat\x_+=\half(\x_++\x)\,,\ \hat\x=\half(\x+\x_-)\,$.
By the assumption there exists $\,\bzeta\in\R^{2k}\,$ such that 
$\,\half(\x-\x_-)=\U\bzeta\,$. Then $\,\x=\hat\x+\U\bzeta\,$. 
From \eqref{eq:eemp_K} and
$\,z\phi(z)=e^z-1\,$ we get
\begin{align*}
  \hat\x_+&=\x-\U\,e^{h\F}\,\bzeta+
h\,\U\,\phi(h\F)\,\U^\dagger\big(\H(\hat\x+\U\bzeta)+\vec c\big)\\
&=\hat\x+h\,\U\,\phi(h\F)\,\U^\dagger(\H\hat\x+\vec c)+
   \U\,[\I-e^{h\F}+h\,\phi(h\F)\F]\,\bzeta\\
&=\hat\x+h\,\U\,\phi(h\F)\,\U^\dagger(\H\hat\x+\vec c)\ .
\end{align*}
Thus the $\,\hat\x\,$-vectors propagate according to the exponential Euler method
\eqref{eq:exp_Euler} and we get the result by Lemma \ref{eEuler}.
\end{proof}

In case the column space of $\,\U\,$ contains the vector $\,\x_--\x\,$, we also have the following.

\begin{lem} \label{lem:eemp_symmetry}
Assume that $\,\U\,$ is a full rank matrix at $\,\x\,$ with a left inverse 
$\,\U^\dagger\,$, and that $\,R(\U)\,$ contains $\,\x_--\x\,$.
Then, the approximate explicit exponential midpoint rule is symmetric.
\end{lem}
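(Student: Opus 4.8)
The plan is to exploit that every ingredient of the scheme \eqref{eq:eemp_K} --- the matrix $\,\U\,$, the reduced Jacobian $\,\F=\U^\dagger\H\U\,$, and the force $\,\f(\x)\,$ together with its Jacobian $\,\H=\D\f(\x)\,$ --- is anchored at the middle point $\,\x\,$, so that the interchange $\,\x_-\leftrightarrow\x_+\,$ leaves all of them untouched. Symmetry of the three--term scheme then means precisely that, whenever $\,(\x_-,\x,\x_+)\,$ satisfy \eqref{eq:eemp_K} with step $\,h\,$, the triple $\,(\x_+,\x,\x_-)\,$ satisfies it with step $\,-h\,$; and since $\,\f\,$ enters only through its value and Jacobian at the frozen midpoint, no linearity of $\,\f\,$ is required.

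First I would pass to the reduced space. Put $\,\bzeta_\pm:=\U^\dagger(\x_\pm-\x)\,$ and $\,\vec g:=\U^\dagger\f(\x)\,$. Because $\,\U^\dagger\,$ is a left inverse, $\,\U^\dagger\U=\I\,$, so applying $\,\U^\dagger\,$ to \eqref{eq:eemp_K} collapses it to the purely $\,2k\,$--dimensional recurrence
\[\bzeta_+=e^{h\F}\,\bzeta_-+2h\,\phi(h\F)\,\vec g\ .\]
The hypothesis $\,\x_--\x\in R(\U)\,$ is exactly what makes this faithful: it yields $\,\U\bzeta_-=\U\U^\dagger(\x_--\x)=\x_--\x\,$, so $\,\x_--\x\,$ is exactly recovered from $\,\bzeta_-\,$. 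Moreover the right--hand side of \eqref{eq:eemp_K} lies in $\,R(\U)\,$, whence $\,\x_+-\x=\U\bzeta_+\,$ automatically, so the reverse step is again a legitimate instance of the scheme.

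Second I would verify the symmetry of the reduced recurrence, which is now an identity in the \emph{fixed} data $\,\F,\vec g\,$. Multiplying by $\,e^{-h\F}\,$ and using the single identity $\,e^{-h\F}\phi(h\F)=\phi(-h\F)\,$ (immediate from $\,\phi(z)=z^{-1}(e^z-1)\,$) gives
\[\bzeta_-=e^{-h\F}\,\bzeta_++2(-h)\,\phi(-h\F)\,\vec g\ ,\]
which is exactly the reduced recurrence with $\,h\mapsto-h\,$ and $\,\bzeta_-\leftrightarrow\bzeta_+\,$. This is the same elementary calculation that renders the full exponential midpoint rule \eqref{eq:eemp} symmetric, carried out with $\,\F\,$ in place of $\,\H\,$. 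Finally I would lift back: multiplying this identity by $\,\U\,$ and using $\,\U\bzeta_-=\x_--\x\,$ together with $\,\bzeta_+=\U^\dagger(\x_+-\x)\,$ yields
\[\x_-=\x+\U\,e^{-h\F}\,\U^\dagger(\x_+-\x)+2(-h)\,\U\,\phi(-h\F)\,\U^\dagger\f(\x)\ ,\]
i.e. \eqref{eq:eemp_K} with $\,h\mapsto-h\,$ and $\,\x_-\leftrightarrow\x_+\,$, which is the claim.

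The only delicate point --- and the sole place the range hypothesis enters --- is this passage between full and reduced states: $\,\U\U^\dagger\,$ is merely a (generally oblique) projection onto $\,R(\U)\,$, not the identity, so without $\,\x_--\x\in R(\U)\,$ the reconstruction $\,\U\bzeta_-=\x_--\x\,$ would fail and the lift--back would produce the wrong large--space vector. Everything else is the algebra of $\,\phi\,$ already used for the unreduced scheme.
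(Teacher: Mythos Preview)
Your proof is correct and follows essentially the same route as the paper: both use the identity $e^{-z}\phi(z)=\phi(-z)$ and the fact that $\U\U^\dagger$ acts as the identity on $\x_--\x$ by the range hypothesis. The only difference is presentational: you first apply $\U^\dagger$ to reduce to $\R^{2k}$, do the algebra there, and then lift by $\U$, whereas the paper performs the equivalent single multiplication by $\U\,e^{-h\F}\,\U^\dagger$ directly in $\R^{2n}$.
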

\begin{proof}
Multiplying \eqref{eq:eemp_K} with 
$\,\U\,e^{-h\F}\,\U^\dagger\,$ gives
\[\U\,e^{-h\F}\,\U^\dagger(\x_+-\x)=\x_--\x+
2h\,\U\,e^{-h\F}\,\phi(h\F)\,\U^\dagger\f(\x)\ .\]
Since $\,e^{-z}\phi(z)=\phi(-z)\,$ we get
\[\x_-=\x+\U\,e^{-h\F}\,\U^\dagger(\x_+-\x)-
2h\,\U\,\phi(-h\F)\,\U^\dagger\f(\x)\ .\]
Thus the steps backward can be taken by replacing $\,h\,$ with $\,-h\,$. \\
\end{proof}

\subsection{Implicit exponential midpoint rule} \label{subsec:IEMP}

As a third exponential integrator, we consider implicit exponential midpoint rule (IEMP)
\begin{equation} \label{eq:imp}
\begin{aligned}
  0 &=e^{h\H}(\x - \hat\x)+ h\,\phi(h\H)\,\f(\hat\x)\ , \\
  \x_+ &= \hat\x  + e^{2 h\H}(\x - \hat\x) + 2 h\,\phi(2 h\H)\,\f(\hat\x)\
\end{aligned}
\end{equation}
(see~\cite{Celledoni_et_al}). This gives a symmetric method when the linear part $\, \H \,$ of $\, \f \,$ is fixed.
When $\H$ comes from a linearization of a nonlinear Hamiltonian system \eqref{eq:Ham_sys}, 
 the method is symmetric if $\, \H = D\f(\hat\x)\,$, where $\hat\x$ satisfies \eqref{eq:imp}.

For linear systems of the form $\, \f(\x) = \H \x + \vec c \,$, the second equation of \eqref{eq:imp} can be written
equivalently as $\, \x_+ = \hat\x  + 2 h\,\phi(2 h\H)\,\f(\x)\, $. Then, 
$\,\x_+$ propagates according to the exponential Euler method and
the energy is preserved in case $\,\U\,$ is symplectic (Lemma~\ref{eEuler}).

When we apply \eqref{eq:imp}, the total approximation is symmetric if 
$\, \H \,$ is evaluated at the midpoint $\hat \x$.

\begin{lem}
Assume that $\,\U\,$ is a full rank matrix with a left inverse $\, \U^\dagger \,$.
Suppose $\, \H \,$ is evaluated at $\hat\x$, where $\hat \x$ satisfies \eqref{eq:imp}.
Consider the approximation
\begin{equation} \label{eq:symm_step}
\, \x_+ = \x + \U \bxi_+,
\end{equation}
where $\,\bxi\,$ is obtained from applying \eqref{eq:imp} to the local system. Then, \eqref{eq:symm_step} gives a symmetric method.

\end{lem}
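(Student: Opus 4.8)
The plan is to reduce the statement to two facts already available for \eqref{eq:imp}: that the implicit exponential midpoint rule is symmetric when its Jacobian is taken at the midpoint, and that its two lines collapse to a single exponential-Euler relation. First I would apply \eqref{eq:imp} to the local system $\,\bxi'=\U^\dagger\f(\x+\U\bxi)\,$ started from $\,\bxi_0=0\,$. Its Jacobian at the midpoint $\,\hat\bxi\,$ is exactly $\,\F=\U^\dagger\H\U\,$ with $\,\H=D\f(\hat\x)\,$ and $\,\hat\x=\x+\U\hat\bxi\,$, so the local method is \eqref{eq:imp} in $\,\R^{2k}\,$ with the linear part evaluated at its own midpoint; hence the local one-step map $\,\bxi_0\mapsto\bxi_+\,$ is symmetric. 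Using $\,\U^\dagger\U=\I\,$ I then lift the two lines to $\,R(\U)\,$: the midpoint line yields $\,\hat\bxi=h\,\phi(-h\F)\U^\dagger\f(\hat\x)\,$, and combining the two lines with $\,e^{-z}\phi(z)=\phi(-z)\,$ and $\,-e^{z}\phi(z)+2\phi(2z)=\phi(z)\,$ collapses the second line to an exponential-Euler step, exactly as in the reduction of \eqref{eq:imp}. This leaves the pair
\[
\x_+=\hat\x+h\,\U\phi(h\F)\U^\dagger\f(\hat\x),\qquad
\x=\hat\x-h\,\U\phi(-h\F)\U^\dagger\f(\hat\x).
\]

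Symmetry can then be read off directly: the exchange $\,(\x,\x_+,h)\mapsto(\x_+,\x,-h)\,$ interchanges these two relations, since $\,h\mapsto-h\,$ swaps $\,\phi(h\F)\,$ with $\,\phi(-h\F)\,$ and flips the sign, provided the midpoint $\,\hat\x\,$ — and with it $\,\F=\U^\dagger D\f(\hat\x)\U\,$ and the chart $\,\U,\U^\dagger\,$ — is shared by the two steps. I would phrase this through translation invariance: the backward step at $\,\x_+\,$ uses the field $\,\bzeta\mapsto\U^\dagger\f(\x_++\U\bzeta)=\U^\dagger\f\big(\x+\U(\bzeta+\bxi_+)\big)\,$, a pure translate of the forward field, and \eqref{eq:imp} sees only $\,\F\,$ and values of the field, its midpoint Jacobian translating consistently. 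Combining translation invariance with the symmetry of the local map gives the backward iterate $\,\bzeta_+=-\bxi_+\,$, whence $\,\x_{++}=\x_++\U\bzeta_+=\x\,$.

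The one delicate point — the main obstacle — is exactly the justification that the backward step reuses the forward midpoint $\,\hat\x\,$, and hence the same projected Jacobian $\,\F\,$. This is the counterpart of the hypothesis $\,R(\U)\ni\x_--\x\,$ in Lemma~\ref{lem:eemp_symmetry}; here it is automatic, because $\,\x_+-\x=\U\bxi_+\in R(\U)\,$ places both steps in the single affine subspace $\,\x+R(\U)\,$ charted by the same $\,\U,\U^\dagger\,$, so that the (implicit) backward midpoint equation is solved by $\,\hat\x\,$ itself, uniquely for $\,h\,$ small. Granting this, the remaining manipulations are the same $\,\phi\,$-function algebra as in the full-space argument and transcribe without change.
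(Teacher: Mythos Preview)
Your argument is correct and uses the same core ingredients as the paper: apply \eqref{eq:imp} to the local system, collapse its two lines via the $\phi$-function identities $e^{-z}\phi(z)=\phi(-z)$ and $e^{z}\phi(z)=2\phi(2z)-\phi(z)$, and then observe that the forward and backward steps share the same midpoint $\hat\x$ (and hence the same $\F$) because $\x_+-\x\in R(\U)$.

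The only real difference is in the endpoint of the algebra. You stop at the symmetric pair
\[
\x_+=\hat\x+h\,\U\phi(h\F)\U^\dagger\f(\hat\x),\qquad
\x=\hat\x-h\,\U\phi(-h\F)\U^\dagger\f(\hat\x),
\]
and argue symmetry by the swap $(\x,\x_+,h)\leftrightarrow(\x_+,\x,-h)$ together with a translation-invariance argument for the shared midpoint. The paper instead pushes one step further: using the first local equation $e^{h\F}\hat\bxi=h\,\phi(h\F)\U^\dagger\f(\hat\x)$ to eliminate $\f(\hat\x)$ altogether, it arrives at the single relation
\[
\x_+-\hat\x=\U e^{h\F}\U^\dagger(\hat\x-\x),
\]
from which symmetry follows by multiplying by $\U e^{-h\F}\U^\dagger$ and using $\U\U^\dagger(\hat\x-\x)=\hat\x-\x$. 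This is a bit cleaner, since the symmetry is then a one-line matrix identity and the ``shared midpoint'' issue is subsumed in that same multiplication; your translation-invariance discussion is correct but more work than necessary. Either route establishes the lemma.
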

\begin{proof}
Applying \eqref{eq:imp} to the local system \eqref{eq:Ham_sys_s} gives
\begin{equation} \label{eq:imp_local}
\begin{aligned}
   0 &= - e^{h\H} \bxi + h\,\phi(h\F) \, \U^\dagger \f( \x + \U \bxi) \\
 \bxi_+ &= \bxi - e^{2h\F} \bxi + 2h \, \phi(2h\F) \, \U^\dagger \f( \x + \U \bxi). \\
\end{aligned}
\end{equation}
We show that \eqref{eq:imp_local} leads to a symmetric approximation of the full system.
Multiplying the upper equation of \eqref{eq:imp_local} by $\, e^{h\F}\,$, and using the relation
$\,e^z \phi(z) = 2\phi(2z) - \phi(z)\,$ gives
\begin{equation*}
- e^{h\H}\bxi + 2 h\,\phi(2 h\F) \, \U^\dagger \f( \x + \U \bxi) = h\,\phi( h\F) \, \U^\dagger \f( \x + \U \bxi)\ .
\end{equation*}
Combining this and both equations of \eqref{eq:imp_local} gives
\begin{equation*}
\bxi_+ = \bxi +  e^{h \F} \bxi. \\
\end{equation*}
Multiplying this from left by $\, \U \,$ and adding $\, \x \,$ gives
\begin{equation*}
\x_+ - \hat\x =  \U e^{h \F} \U^\dagger (\hat\x - \x) \ , \\
\end{equation*}
where $\, \hat \x = \x + \U  \bxi \,$ and $\, \x_+ = \x + \U  \bxi \,$. Replacing here $h$ with $-h$ and multiplying 
from the left by $ \, \U e^{h \F} \U^\dagger \,$ shows the symmetry. 
\end{proof}

Crucial for the symmetry of EIMP is that the Jacobian $\,\H \,$ and the basis $\, \U \,$ are the same when considering
stepping from $\, \x \,$ to $\, \x_+ \,$ and vice versa.
This is the case if $\, \H \,$ is evaluated at $\, \hat\x \,$, and $\, \U \,$ is generated using the Krylov subspace methods
described in~\ref{Sec:Krylov}.
%

Our numerical strategy is to perform one time step $h$ using the exponential Euler method 
from $\, \x \,$ to $\, \widetilde \x \, $ in order to approximate the midpoint $\, \hat \x \,$. Then after evaluating the Jacobian
$\, \H \,$ and forming the basis $\, \U \,$ at $\, \widetilde \x \, $ we solve the implicit equation
using fixed point iteration and perform the step of size $\, 2h \,$ to obtain $\, \bxi_+ \,$ and
$\, \x_+ = \x + \U \bxi_+ \,$.

\section{Forming the local basis using Krylov subspace methods} \label{Sec:Krylov}

We discuss next the approximation of matrix valued $\phi$ functions using Krylov subspace methods and
show how they are naturally connected to the local approximation discussed in Section~\ref{Sec:Integrators}.

When matrix $\,\A\,$ is large but the operation $\,\v\to\A\v\,$ inexpensive, 
it is reasonable to use Krylov subspace methods. These work in Krylov subspaces
$$
K_k(\A,\v)=\Span\{\v,\A\v,\A^2\v,\dots ,\A^{k-1}\v\}.
$$
Then we have $\,\A\,K_k(\A,\v)\subset K_{k+1}(\A,\v)\,$. The Arnoldi iteration 
uses the Gram-Schmidt process and produces an orthonormal basis 
$\,\q^1,\dots,\q^k\,$ for $\,K_k(\A,\v)\,$. 
Denote $\,\Q_k=[\q^1 \dots \q^k]\,$ and $\,\F_k=\Q_k^T\A\Q_k\,$, which is
a Hessenberg matrix. 

If the iteration stops, i.e. $\,\A^k\v\in K_k(\A,\v)\,$, then
\begin{itemize}
\item[a)] $\,\A K_k(\A,\v)\subset K_k(\A,\v)\,$ and $\, K_j(\A,\v)= K_k(\A,\v)\,$
for all $\,j>k\,$, 
\item[b)]  $\,\A\Q_k=\Q_k\F_k\,$ and for the spectra we have 
$\,\Lambda(\F_k)\subset\Lambda(\A)\,$, 
\item[c)] If $\,\varphi(z)=\sum_ja_j\,z^j\,$ has convergence radius
larger than the spectral radius of $\,\A\,$ and $\,\w\in R(\Q_k)\,$, then
\[\varphi(\A)\w=\Q_k\,\varphi(\F_k)\,\Q_k^T\w\ .\]
\end{itemize}
The effectivity of Krylov subspace methods is based on the fact that 
if the component of $\,\A^k\v\,$ orthogonal to $\, K_k(\A,\v)\,$ is small, 
then things are approximately as 
above and this can happen already for a reasonable size $\,k\,$.
Thus it is reasonable to consider the approximation
\begin{equation} \label{eq:Krylov_approx}
\varphi(\A) \v = \Q_k\,\varphi(\F_k)\,\Q_k^T\w\,
\end{equation}
which was used already in~\cite{Druskin_Knizhnerman} and~\cite{Gallopoulos_Saad}. We refer to~\cite{Hochbruck_Lubich}
for a detailed error analysis.

We show next how the Krylov approximation \eqref{eq:Krylov_approx} is naturally connected to 
the strategy of applying exponential integrators to the local system \eqref{eq:Ham_sys_s}.

\subsection{Equivalence of the Krylov and the local system approximations}

Consider the local system \eqref{eq:Ham_sys_s} corresponding to the basis
$\U = \Q_k$, where $\Q_k$ gives an orthonormal basis for $K_k(\H,\f(\x_0))$. 
Recall from Section~\ref{Sec:Integrators} the strategy of solving the local system~\eqref{eq:Ham_sys_s}, i.e.,
$$
\bxi'(t)=\U^\dagger \f(\x_0+\U\bxi(t))\,
$$
numerically from $\,\bxi_0=0\,$ up to $\,\bxi_1\approx\bxi(t_1)\,$ and setting $\,\x_1=\x_0+\U\bxi_1\,$. 
As shown in Subsection~\ref{subsec:EE}, applying the exponential Euler method to the local system gives
the approximation
$$
\x_+ = \x + h \U \phi(h \F) \U^\dagger \f(\x).
$$
We immidiately see from \eqref{eq:Krylov_approx} that this is the Krylov subspace approximation of the
exponential Euler step \eqref{eq:exp_E}.

As shown in subsection~\ref{subsec:EEMP}, applying the exponential explicit midpoint rule gives
$$
  \x_+ = \x + \U\,e^{h\F}\,\U^\dagger(\x_- - \x) + 2h\,\U\,\phi(h\F)\,\U^\dagger\f(\x).
$$
As for the explicit Euler method, the resulting formula can be seen as a Krylov subspace approximation \eqref{eq:Krylov_approx}
of the EEMP step \eqref{eq:eemp}.
In order to satisfy the assumptions of Lemma~\ref{lem:eemp_symmetry} 
the vector $\x_- - \x$ has to be in the range of $\U$. This is addressed in Subsection~\ref{subsec:add_vector}.

Similarly, if we perform a Krylov approximation of the IEMP step \eqref{eq:imp}, and denote $\bxi = \hat \x - \x$ and $\bxi_+ = \x_+ - \x$,
we get the small dimensional system \eqref{eq:imp_local}. \\

The present concern, however, is that if $\,\A\,$ above is a Hamiltonian matrix, 
this does not necessarily bring  any special structure to $\,\Q_k\,$ or $\F_k$.


\subsection{Symplectic Krylov processes}

In order to obtain good local approximations for a Hamiltonian system with
linear part $\,\H\,$ we would like to have
\begin{itemize}
\item[a)] A symplectic subspace $\,\W\,$ with a corresponding basis.
\item[b)] $\,K_k(\H,\f)\subset\W\,$ in order to have polynomials of $\,\H\,$
applied to $\,\f\,$  represented in $\,\W\,$. We expect this to be worth
pursuing for approximations of $\,\varphi(\H)\f\,$. 
 \end{itemize}

Consider first the Krylov subspace corresponding to $\,\H\,$ and $\,\v\,$:
$$ K_k(\H,\v)=\Span\{\v,\H\v,\H^2\v,\dots ,\H^{k-1}\v\}\ ,$$
and set $\,W_k=K_k(\H,\v)+\J\,K_k(\H,\v)\,$.
\begin{itemize}\itemsep3pt
\item Now $\,\H\,W_k(\H,\v)\not\subset W_{k+1}(\H,\v)\,$, 
generally.
\item If $\,p\,$ is a degree $\,k-1\,$ polynomial, then 
$\,p(\H)\v\in K_k(\H,\v)\subset W_k\,$. 
\end{itemize}
The construction of a symplectic basis for $\, W_k\,$ is
slightly more complicated than the standard Arnoldi process:
\medskip

We just reorthogonalize with respect to
$\,\inprod{\,\cdot\,,\,\cdot\,}\,$ and $\,\omega(\,\cdot\,,\,\cdot\,)\,$ 
-- the $\,\inprod{\,\cdot\,,\,\cdot\,}\,$\break --orthogonal vectors provided by the
standard Arnoldi. The result is a symplectic and orthonormal matrix.
\begin{enumerate}
\item \quad $\,\q=\v/\norm{\v}\,,$ $\,\Q=[\q]\,$ $\,\V=\Q\,$ 
\item \quad for $\,j=2,\dots,k\,$ do \qquad $\,\r=\H\,\q\,$ \\ 
\null\quad\quad $\,\r\leftarrow\r-\Q\Q^T\r\,$,\\ 
\null\quad\quad if $\,\r\ne0\,$, set $\,\q=\r/\norm{\r}\,$, 
\qquad$\Q\leftarrow[\Q\,,\,\q]\,$,\\
\null\quad\quad\quad $\,\r=\q-\V\V^T\q-\J\V\V^T\J^T\q\,$,\\
\null \quad\quad\quad $\,\V\leftarrow[\V\,,\,\r/\norm \r]\,$.\\  
\null\quad\quad else stop.
\item\quad Set $\,\U=[\V\ \J\V]\,$, $\,\F=\U^T\H\U\,$.
\end{enumerate}
Here the columns of $\,\Q\,$ form an orthonormal basis for $\, K_k(\H,\v)\,$
and those of $\,\U\,$ a symplectic basis for $\,W_k\,$. 
\medskip

\begin{remark}
There is a way to construct matrix $\,\F\,$ more economically from the 
computations of step 2. but anyway the reorthogonalization stays the costly part
of this approach.
\end{remark}

\noindent\textbf{Isotropic Arnoldi}
\smallskip

Mehrmann and Watkins \cite{Mehrmann_Watkins}
suggest the \emph{isotropic Arnoldi process}, which is a direct
$\,\inprod{\,\cdot\,,\,\cdot\,}\,$ and $\,\omega(\,\cdot\,,\,\cdot\,)\,$ 
--orthogonalization in the Arnoldi process:
\begin{enumerate}
\item \quad $\,\q=\v/\norm{\v}\,,$ $\,\Q=[\q]\,$ 
\item \quad for $\,j=2,\dots,k\,$ do \qquad $\,\r=\H\,\q\,$ \\ 
\null\quad\quad $\,\r\leftarrow\r-\Q\Q^T\r-\J\Q\Q^T\J^T\r\,$,\\ 
\null\quad\quad if $\,\r\ne0\,$, set $\,\q=\r/\norm{\r}\,$, 
\qquad$\Q\leftarrow[\Q\,,\,\q]\,$,\\
\null\quad\quad else stop.
\item\quad Set $\,\U=[\Q\ \J\Q]\,$, $\,\F=\U^T\H\U\,$.
\end{enumerate}
Here we obtain a symplectic matrix with orthonormal columns. 
However its range does not necessarily contain the Krylov subspace $\,K_k(\H,\v)\,$. 
Thus, generally, this iteration does not have the property that 
$\,p(\H)\v\,$ is in the span of $\,\u^1,\dots,\u^k\,$ for every 
polynomial $\,p\,$ of degree $\,k-1\,$. 
Since our present aim is to approximate operator functions
that have converging power series, this iteration can be expected to be
less effective for our purposes\footnote{Mehrmann and Watkins use the iteration
for a quite different purpose: eigenpairs of skew-Hamiltonian/Hamiltonian 
pencils.}. We will see this in the numerical tests.

Also there is a possibility of a breakdown: after orthogonalization we may 
get $\,\r=0\,$ without obtaining any useful information about $\,K_k(\H,\v)\,$.
\bigskip

\noindent\textbf{Hamiltonian Lanczos}\\
Benner, Fa\ss bender, and Watkins have several versions of Hamiltonian
Lanczos processes (see \cite{Benner_Fassbender}, \cite{Watkins}). 
The following is a promising one from Watkins 
(Algorithm 4 of \cite{Watkins}):
\begin{enumerate}
\item \quad $\,\u^0=0,\ \u^1=\v/\norm{\v}\,,\ \beta_0=0\,,$
\item \quad for $\,j=0,1,\dots,k\,$ do \\
\null\quad\quad if $\,j>0\,$  , then \\ 
\null\quad\quad\quad$\,\x=\H\,\u^j\,,\ \alpha_j=\inprod{\J\v^j,\x_j}$ \\
\null\quad\quad\quad$\,\u^{j+1}=\x-\alpha_j\u^j-\beta_{j-1}\u_{j-1}\,$ \\
\null\quad\quad if $\,j<k\,$  , then \\
\null\quad\quad\quad 
$\,\v^{j+1}=\H\u^{j+1}\,,\ \tau=\inprod{\J \v^{j+1},\u^{j+1}}\,$  \\
\null\quad\quad\quad if $\,\tau\approx 0\,$ then  stop (breakdown).\\
\null\quad\quad\quad 
$\,\sigma=\sqrt{\abs\tau}\,,\ \delta_{j+1}=\sgn\tau\,$ \\
\null\quad\quad\quad $\,\u^{j+1}\leftarrow\u^{j+1}/\sigma\,$, 
$\,\v^{j+1}\leftarrow\delta_{j+1}\v^{j+1}/\sigma\,$ \\
\null\quad\quad\quad if $\,j>0\,$  , then $\,\beta_j=\sigma\,$ 
\item\quad Form the matrices\\[3pt] 
\null\quad\quad$\,\U=[\u^1,\dots,\u^k,\v^1,\dots,\v^k]\,$,
\null\quad\quad
$\,\F=\bmat{0&\mat T\\\mat D&0}\,,$\\[3pt]
where
\null\quad$\ \mat T=\smat{\alpha_1&\beta_1&&&\\\beta_1&\alpha_2&\beta_2&&\\
&\beta_2&\sddots&\sddots&\\&&\sddots&\sddots&\beta_{k-1}\\
&&&\beta_{k-1}&\alpha_k}\,$ and 
$\,\mat D=\smat{\delta_1&&&&\\&\delta_2&&&\\
&&\sddots&&\\&&&\sddots&\\&&&&\delta_k}\,.$ \\[6pt]
Then $\,\U\;:\;\R^{2k}\to\Rtn\,$ is symplectic, its range contains
the vectors $\,\H^j\,\v\,,\ j=0,\dots,2k-1\,$,
and  $\,\F=\U^\dagger\H\U\,$.
\end{enumerate}
Due to short recursion this is an economic iteration. 
But it has similar problems as the usual biorthogonal Lanczos,
e.g., near breakdowns and loss of orthogonality. These can be partly
circumvented. For small $\,k\,$ this may be a good choice.

By the very construction of these symplectic maps we get the following:

\begin{proposition}
  Combining any of the symplectic Krylov processes with a method that
preserves energy for the small dimensional system \eqref{eq:Ham_sys_s}
will preserve the energy of the original system, too.
\end{proposition}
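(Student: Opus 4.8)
The plan is to read this proposition as a nearly immediate corollary of the construction rather than as a statement requiring new computation, so the main work is in correctly chaining together facts already established in the excerpt. The claim connects two ingredients: \emph{any of the symplectic Krylov processes} produce a symplectic matrix $\U$ (this is asserted for each of the three processes in Section~\ref{Sec:Krylov}, where $\U^T\J_n\U=\J_k$, equivalently $\U^\dagger=\J_k^{-1}\U^T\J_n$ is a genuine left inverse), and \emph{a method that preserves energy for the small system} keeps $\eta$ constant along the computed trajectory in $\R^{2k}$. The goal is to transfer this conservation back to $H$ on $\Rtn$.

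First I would fix notation: let $\U$ be the symplectic basis produced by one of the processes at the point $\x_0$, and recall from the discussion around \eqref{eq:Ham_sys_s} that because $\U$ is symplectic the reduced function $\eta(\bxi)=H(\x_0+\U\bxi)$ generates a genuine Hamiltonian system $\bxi'=\U^\dagger\f(\x_0+\U\bxi)$ in $\R^{2k}$. Next I would invoke the energy-preservation hypothesis on the reduced system: the chosen integrator computes $\bxi_1$ with $\eta(\bxi_1)=\eta(\bxi_0)=\eta(0)$. Finally I would unwind the definition of $\eta$: setting $\x_1=\x_0+\U\bxi_1$, we have $H(\x_1)=\eta(\bxi_1)=\eta(0)=H(\x_0)$, which is exactly energy preservation of the full step. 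This is precisely the argument already sketched in the text just after \eqref{eq:Ham_sys_s}, so the proof is essentially a pointer to that paragraph together with the observation that each symplectic Krylov process does deliver a symplectic $\U$ and hence a Hamiltonian reduced system.

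The only genuine subtlety, and the step I expect to need a sentence of care, is the phrase \emph{any of the symplectic Krylov processes}: the argument above uses only that $\U$ is symplectic so that $\U^\dagger=\J_k^{-1}\U^T\J_n$ and $\eta$ is Hamiltonian on $\R^{2k}$. I would therefore remark that this property is guaranteed by construction for all three processes (the reorthogonalized Arnoldi, isotropic Arnoldi, and Hamiltonian Lanczos), so the Krylov-specific features such as whether $R(\U)$ contains $K_k(\H,\v)$ are irrelevant to the conclusion. In particular no linearity of $\f$ is needed here, in contrast to Lemma~\ref{eEuler}: the claim is about an \emph{energy-preserving} reduced integrator applied to the \emph{nonlinear} reduced Hamiltonian system, so the conservation is exact regardless of the dynamics. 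The honest hard part is not in this proposition at all but lies upstream, in verifying that each process truly yields $\U^T\J_n\U=\J_k$; granting those verifications, the proposition is a one-line consequence.
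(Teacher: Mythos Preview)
Your proposal is correct and matches the paper's approach exactly: the paper offers no explicit proof, prefacing the proposition only with ``By the very construction of these symplectic maps we get the following,'' which is precisely the pointer you give to the paragraph after \eqref{eq:Ham_sys_s} together with the observation that each of the three processes yields a symplectic $\U$. Your identification of the key chain $H(\x_1)=\eta(\bxi_1)=\eta(0)=H(\x_0)$ and your remark that no linearity of $\f$ is needed are both on point.
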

\medskip

In the numerical experiments we will use four algorithms to produce
Krylov subspaces: the standard Arnoldi iteration in $\,\Rtn\,$ and
the three symplectic ones: symplectic Arnoldi, isotropic Arnoldi, and
the Hamiltonian Lanczos process. With respect to their costs to
produce a space of fixed dimension they can be ordered as
\smallskip

Hamiltonian Lanczos $\,<\,$ Arnoldi $\,<\,$ Isotropic Arnoldi $\,<\,$
Symplectic Arnoldi
\bigskip

The main weaknesses of each of these are
\begin{itemize}\itemsep1pt
\item Arnoldi in $\,\Rtn\,$: the approximation is not Hamiltonian.
\item Hamiltonian Lanczos: breakdown, early loss of symplecticity.
\item Isotropic Arnoldi: does not include a Krylov subspace.
\item Symplectic Arnoldi: expensive.
\end{itemize}

\subsubsection{Adding a vector to the basis} \label{subsec:add_vector}

When using the EEMP method \eqref{eq:eemp}, the vector $\u_{-1} - \u_0$ needs to be added to the basis $\U_k$
at each time step.
For orthogonal and/or isotropic basis this is straightforward. For the symplectic basis,
a symplectic version of the Gram--Schmidt algorithm adds $\x$ and $\J \x$ to the basis
$\U = [\V \,\, \W]$. This algorithm is shown in the following pseudocode.
Here we denote $\omega(\cdot, \cdot) = \langle \J \cdot, \cdot \rangle$.
Here the symplectic orthogonalization can also be performed in modified Gram-Schmidt manner, one vector at a time.
Notice also that in the second step the vector $\hat{\x}$ can be scaled with any constant.

\begin{algorithm}
\caption{Symplectic reorthogonalization of $\x \in \mathbb{R}^{2n}$ onto $[ \V \,\, \W] \in \mathbb{R}^{2n \times 2k}$. }
\begin{algorithmic}
\STATE{$ \hat{\x} = \x - \sum\limits_{k=1}^{n} \omega(\w_k,\x) \v_k +   \sum\limits_{k=1}^{n} \omega(\v_k,\x) \w_k$}
\STATE{$ \u_{k+1} \leftarrow \hat{\x}$}
\STATE{$ \x \leftarrow \J \hat{\x}$}
\STATE{$ \tilde{\x} = \x - \sum\limits_{k=1}^{n} \omega(\w_k,\x) \v_k +   \sum\limits_{k=1}^{n} \omega(\v_k,\x) \w_k$}
\STATE{$ \w_{k+1} \leftarrow - \frac{\tilde{\x}}{\omega(\v_{k+1}, \tilde{\x})}$}
\end{algorithmic}
\end{algorithm}

\section{Numerical tests}

We compare numerically the three exponential time integrators of Section~\ref{Sec:Integrators}
and the four Arnoldi like processes of Section~\ref{Sec:Krylov} to produce the local basis $\U_k$.
We apply the methods to large sparse Hamiltonian systems which are obtained from
finite difference discretizations of one dimensional nonlinear wave equations.
For ease of presentation we first illustrate by an example our approach of deriving large sparse Hamiltonian systems 
from Hamiltonian PDEs.
For further examples we refer to~\cite{Celledoni_et_al2}.

\subsection{Spatial discretization of Hamiltonian PDEs} \label{subsec:derivation}
 As an example consider the nonlinear Klein-Gordon equation in one dimension,
\begin{equation} \label{eq:wave_equation}
  u_{tt} = u_{xx} - f(u),
\end{equation}
where $u(x,t)$ is scalar valued periodic function $(u(0,t) = u(L,t)$ for some $L>0$) and $f$ is a smooth function. 
Setting $v = u_t$ and $\u = (u , v)^T$, the equation \eqref{eq:wave_equation} can be viewed as a Hamiltonian system
\begin{equation*}
   \mat J \u_t = \frac{\delta H}{\delta \u}, 
\end{equation*}
where
\begin{equation*}
    \quad \mat J = \begin{bmatrix} 0  &  1  \\ -1 & 0 \end{bmatrix} ,  \quad \frac{\delta H}{\delta \u} = \left( \frac{\partial H}{\partial u} ,\frac{\partial H}{\partial v}\right)^T, 
\end{equation*}
and $\frac{\partial H}{\partial u}$ and $\frac{\partial H}{\partial v}$ denote the functional derivatives of the Hamiltonian
\begin{equation} \label{eq:Hamiltonian}
    H(\u) = \int\limits_0^L \left[ \frac{1}{2} v^2 - \frac{1}{2} u_x^2 + F(u) \right] dx, \quad F'(u) = f(u).
\end{equation}
To obtain a Hamiltonian system approximating the equation \eqref{eq:wave_equation}, we 
perform a discretization with respect to the spatial variable $x$
on an equidistant grid with an interval $\Delta x  = L/n$, $n \in \mathbb{N}^+$, and denote by $q_i(t)$ and $p_i(t)$ $i=1,\ldots,n$, 
the approximations to $u(i\Delta x,t)$ and $v(i\Delta x,t)$. For the second derivative $u_{xx}$ we use the central difference approximation
\begin{equation*}
      u_{xx} (i \Delta x,t) \approx \frac{q_{i-1}(t) - 2q_i(t) + q_{i+1}(t)}{(\Delta x)^2}.
\end{equation*}
Expressing the approximations as vectors
\begin{equation*}
    \q(t) = \begin{bmatrix}
		  q_1(t) \\ \vdots \\ q_n(t)
              \end{bmatrix} \quad \textrm{and} \quad  \p(t) = \begin{bmatrix}	p_1(t) \\ \vdots \\ p_n(t) \end{bmatrix},
\end{equation*}
we get the approximation of the PDE in matrix form as
\begin{equation*}
	\p'(t) = {\boldsymbol \Delta}_n \q(t) - \f(\q(t)),
\end{equation*}
where $\{\f(\q)\}_i = f(q_i)$ and $\boldsymbol \Delta_n \in \mathbb{R}^{n \times n}$ is the discretized Laplacian with periodic boundary conditions,
\begin{equation} \label{eq:laplacian_periodic}
  \boldsymbol \Delta_n = n^2
 \begin{bmatrix}
    -2 & 1 & & &1 \\
    1 & -2 & 1 & & \\
      & \ddots & \ddots & \ddots & \\
      & & 1& -2& 1 \\
     1 & & & 1 & -2 \end{bmatrix}.
\end{equation}
Defining  the Hamiltonian function
\begin{equation} \label{eq:discrete_Hamiltonian}
	H(\q,\p) = \frac{1}{2} \p^T  \p  - \frac{1}{2} \q^T \boldsymbol \Delta_h \q + \sum\limits_{i=1}^n F(q_i),
\end{equation}
we see that
\begin{equation*}
	\p'(t) = -\nabla_{\q} H(\q(t),\p(t)),
\end{equation*}
and by setting $\x(t) = \smat{ \q(t) \\ \p(t) }$, we have the Hamiltonian system 
\begin{equation} \label{eq:Hamiltonian_system}
    \x'(t) = \mat J^{-1} \nabla H(\x(t)), \quad \x(0) = \x_0,
\end{equation}
where $\J=\smat{0&I\\-I&0}$, $\x_0 = \smat{\u_0 \\ \v_0 }$, 
and $\u_0$ and $\v_0$ come from the discretizations
of the initial values of \eqref{eq:wave_equation}. Notice that \eqref{eq:discrete_Hamiltonian} is
the discrete counterpart of \eqref{eq:Hamiltonian}.


\subsection{Linear wave equation} \label{subsec:linear_wave}


As a first numerical example we consider the linear wave equation with periodic boundary conditions,
\begin{equation*}
  \begin{aligned}
      \partial_t u(x,t) &= \partial_{xx} u(x,t) + f(x) \\
      u(x,0) &= u_0(x), \quad u_t(x,0) = v_0(x)  \\
      u(0,t) &= u(L,t) = 0, 
  \end{aligned}
\end{equation*}
where $x\in [0,L]$, $t\in [0,T]$, and
$$
f(x) = \frac{1}{8} \big(x (x-L)\big)^2, \quad u_0(x) = \frac{1}{1 + \sin^2(\pi x)}-1, \quad v_0(x) = 0.
$$

Performing spatial discretization on an equidistant grid of size $n$ using central
differences leads to a Hamiltonian system of the form \eqref{eq:Hamiltonian_system} in $\mathbb{R}^{2n \times 2n}$
with the Hamiltonian
\begin{equation*}
H\big(\x(t) \big) = \frac{1}{2} \q(t)^T \Delta_n \; \q(t) - \frac{1}{2} \norm{\p(t)}^2 + \bc^{\;T} \q(t)
\end{equation*}
and initial data $ \x(0) = \smat{ \u_0 \\ \v_0}$.
Here $\bc_i = f(x_i)$, $(\u_0)_i = u_0(x_i)$ and $(\v_0)_i = v_0(x_i)$, where $x_i = i \Delta x$, and
\begin{equation*}
\Delta_n = n^2 \begin{bmatrix}
    -2 & 1 & & \\
     1 & \ddots & \ddots & \\
      & \ddots & \ddots & 1 \\
      & & 1 & -2 \end{bmatrix}.
\end{equation*}
We set $L=2$, $n = 400$, $\Delta x = L/n$, and we integrate up to $T=50$ with time step size $h = T/n_t$, where $n_t=2000$.

Using this linear example we illustrate the differences between the iterative processes of Section~\ref{Sec:Krylov}
to produce the basis $\U_k \in \mathbb{R}^{2n \times 2k}$. We apply the
exponential Euler method \eqref{eq:exp_Euler} to the small dimensional system~\eqref{eq:Ham_sys_s} obtained from
the projection using $\U_k$. Note that for linear systems all the three integrators of Section~\ref{Sec:Integrators} propagate as the exponential
Euler method.

As illustrated in Figures~\ref{fig:linear_energies1}, the approximation obtained using the Arnoldi iteration results generally in a
linear growth of the energy error, whereas the symplectic basis gives a bounded energy error.
Figure~\ref{fig:linear_energies2} shows that, as opposed to the Hamiltonian Lanczos approximation, 
the energy error of the Arnoldi approximation is dependent on the accuracy of the approximation.
Notice that in both cases
\begin{equation} \label{eq:K_property} 
K_\ell(\H,\f_0) \subset \mathrm{Range}(\U_k), \quad \textrm{where} \quad \ell = \dim(\U_k),
\end{equation}
and $\f_0$ is the right hand side of \eqref{eq:Hamiltonian_system} evaluated at $x(0)$.
Property \eqref{eq:K_property} means that these processes give a polynomial approximation of degree $\ell$ 
for the exponential Euler step which gives the exact solution at $t=h$. This effect is also seen in Figure~\ref{fig:linear_solutions1},
which depicts the solution errors for the Arnoldi iteration and  the Hamiltonian Lanczos process.
When $\dim(\U_k) = 16$, the methods give errors not far from each other, however for smaller basis size
the symplectic alternative gives more accurate results.

When increasing the basis size also the isotropic Arnoldi and the symplectic Arnoldi
start to perform better (see Figure~\ref{fig:linear_solutions2}).
Need for a larger dimension is expected for the symplectic Arnoldi since instead of \eqref{eq:K_property}, only
$K_{\ell/2}(\H, \f_0) \subset \mathrm{Range}(\U_k)$, where $\ell= \dim(\U_k)$.
Isotropic Arnoldi performs worse as expected due to its poor polynomial approximation properties.
However, both processes give bounded energy errors as in both cases $\U_k$ is symplectic (Figure~\ref{fig:linear_energies3}).

\newpage

\begin{figure}[h!]
\begin{center}
\includegraphics[scale=0.4]{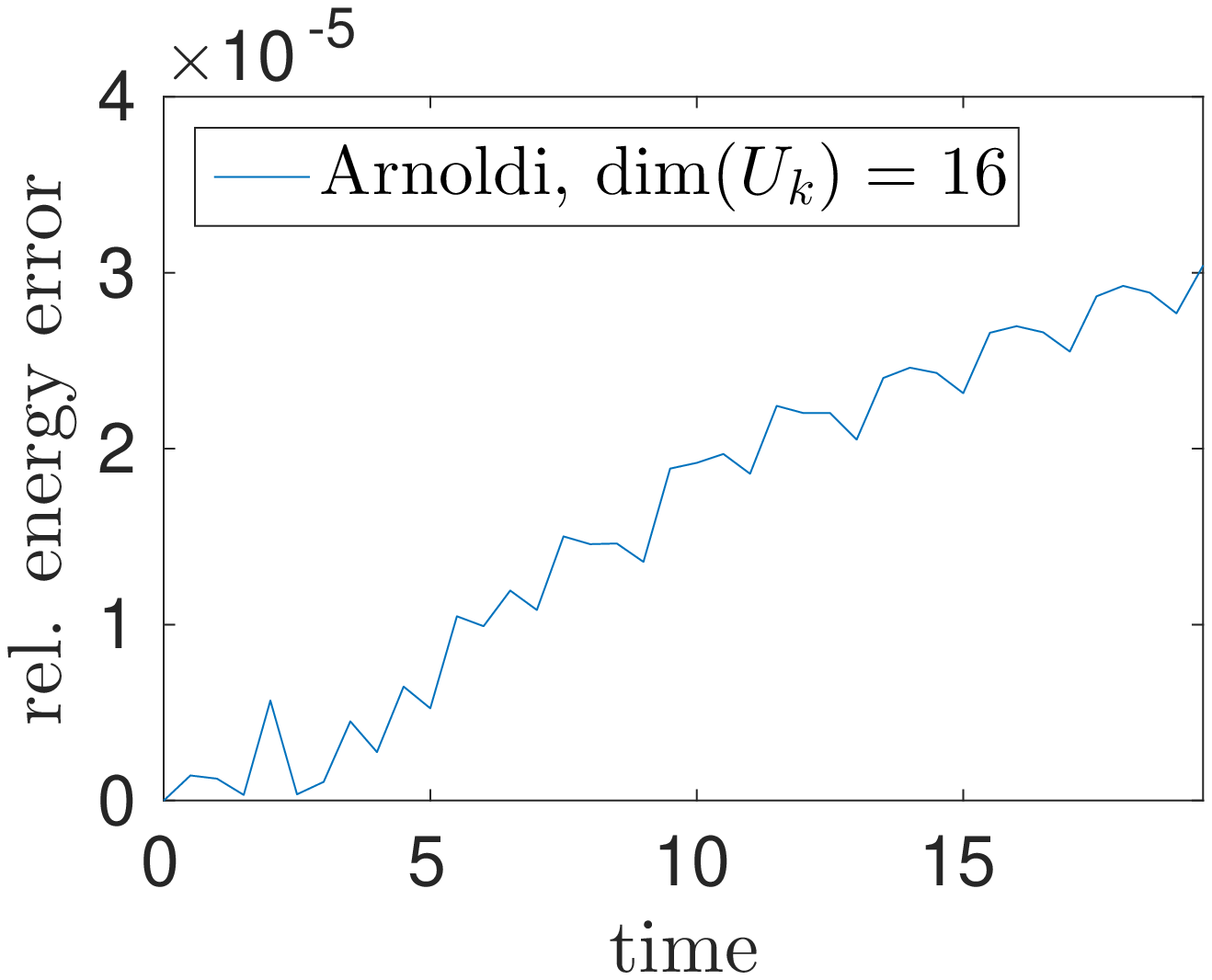}\hspace{-0mm}
\includegraphics[scale=0.4]{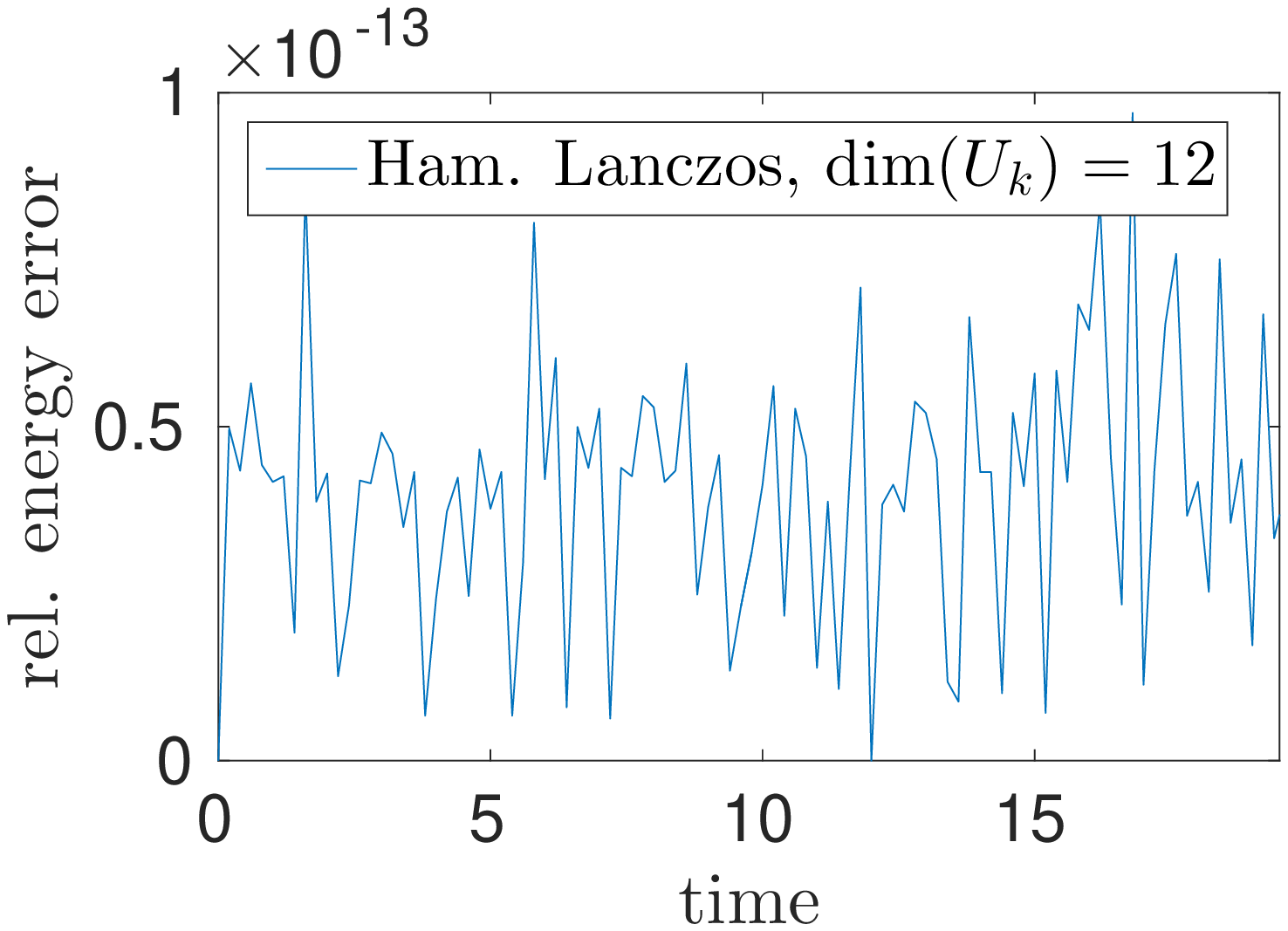}
\end{center}
\caption{Linear wave equation and relative energy errors for the exponential Euler method, 
when $\U_k$ is produced by the Arnoldi iteration
($\U_k \in \mathbb{R}^{2n \times 16}$) and the Hamiltonian Lanczos process ($\U_k \in \mathbb{R}^{2n \times 12}$).}
\label{fig:linear_energies1}
\end{figure}

\begin{figure}[h!]
\begin{center}
\includegraphics[scale=0.5]{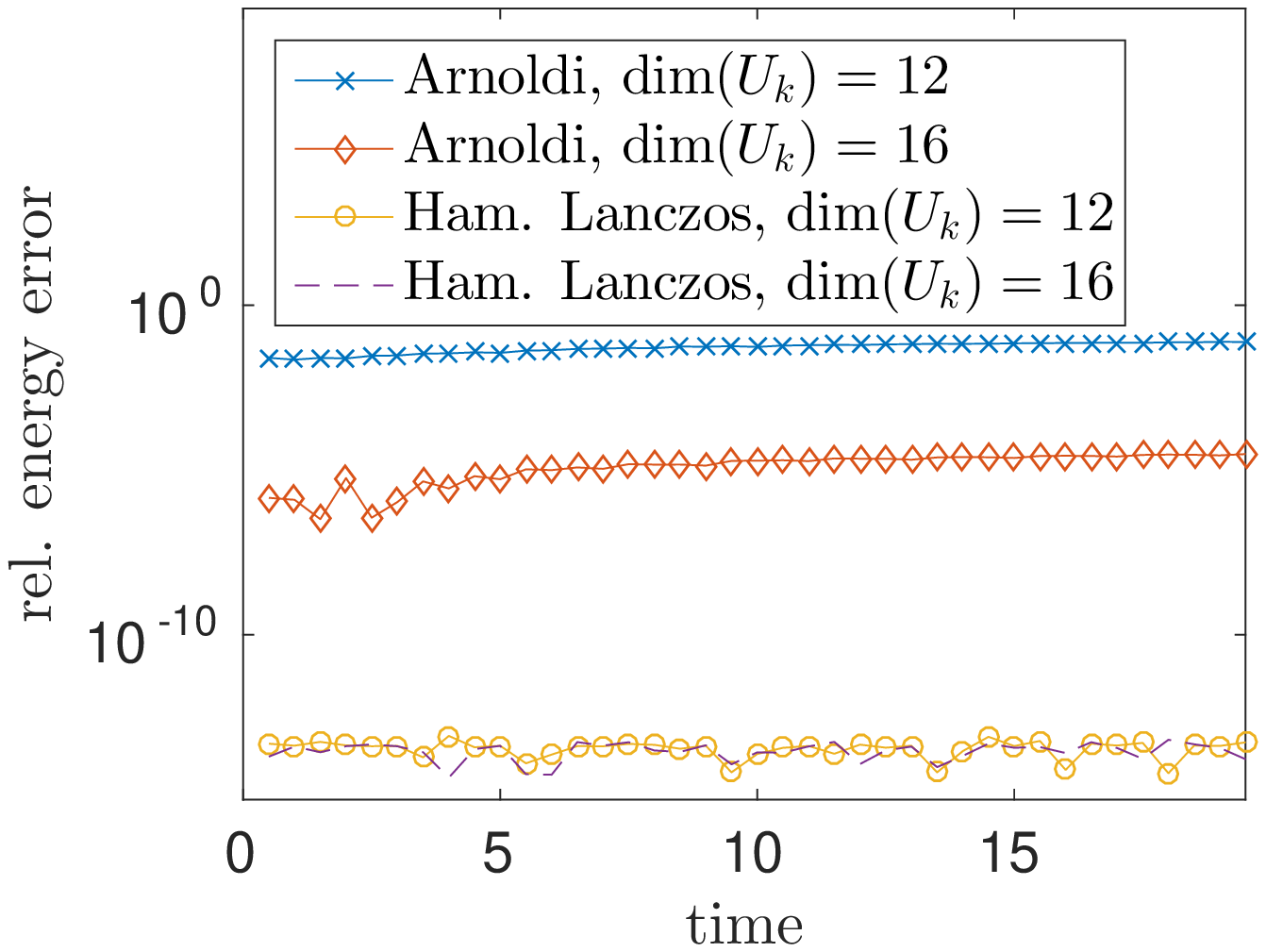}\hspace{-0mm}
\end{center}
\caption{Linear wave equation and energy errors for EE, 
when $\U_k$ is produced by the Arnoldi iteration and the Hamiltonian Lanczos process. }
\label{fig:linear_energies2}
\end{figure}

\begin{figure}[h!]
\begin{center}
\includegraphics[scale=0.44]{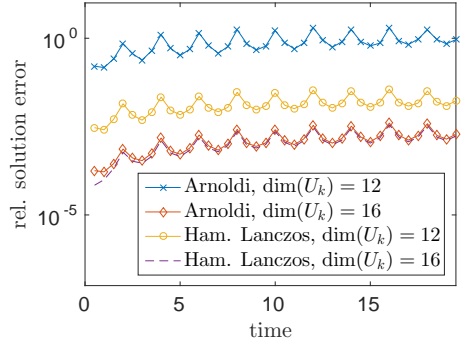}\hspace{-0mm}
\end{center}
\caption{Linear wave equation and solution errors when $\U_k$ is produced by 
the Arnoldi iteration and the Hamiltonian Lanczos process. }
\label{fig:linear_solutions1}
\end{figure}

\begin{figure}[h!]
\begin{center}
\includegraphics[scale=0.5]{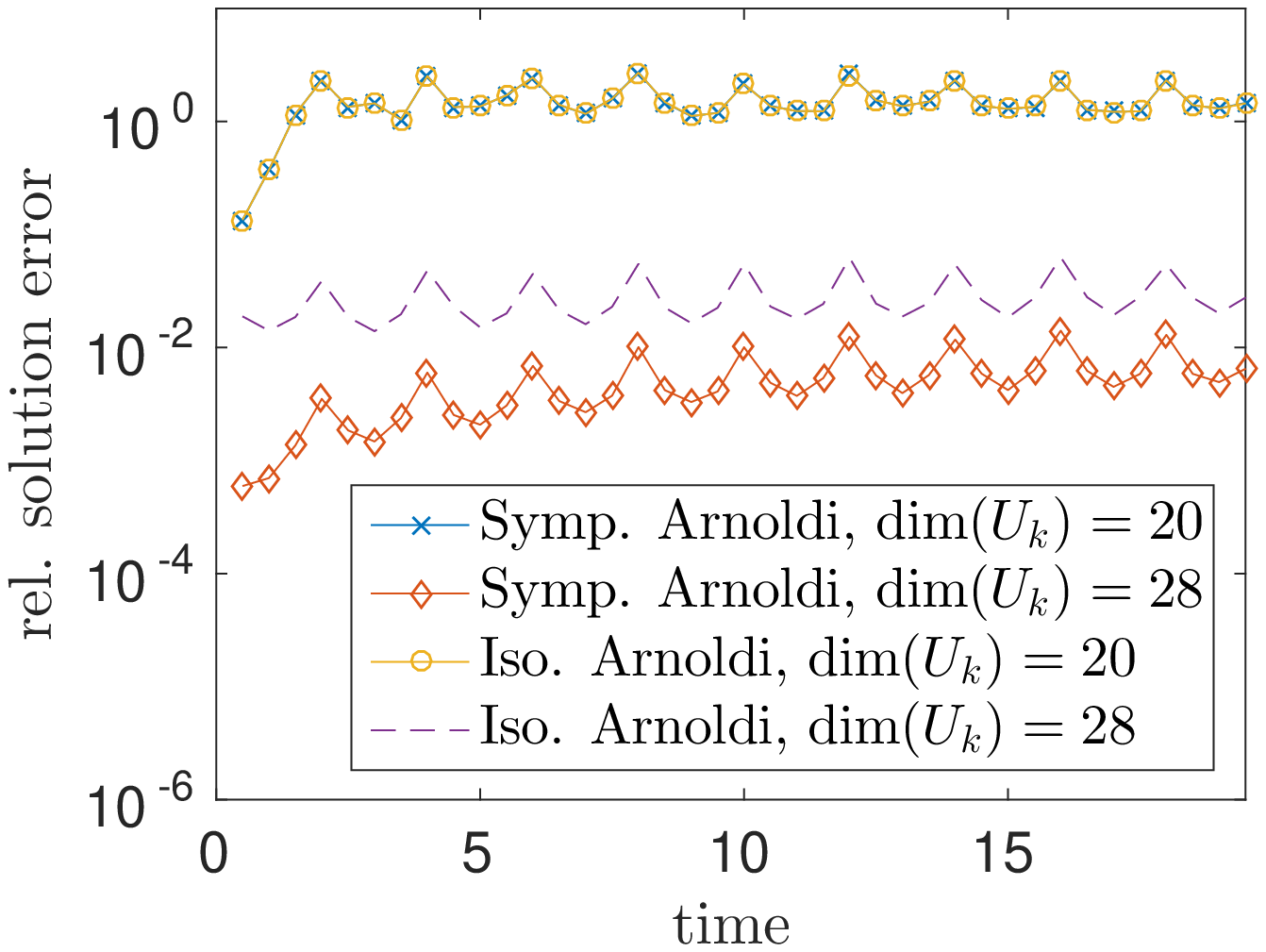}\hspace{-0mm}
\end{center}
\caption{Linear wave equation and solution errors when $U_k$ is produced by 
the symplectic Arnoldi iteration and the isotropic Arnoldi process. }
\label{fig:linear_solutions2}
\end{figure}

\begin{figure}[h!]
\begin{center}
\includegraphics[scale=0.5]{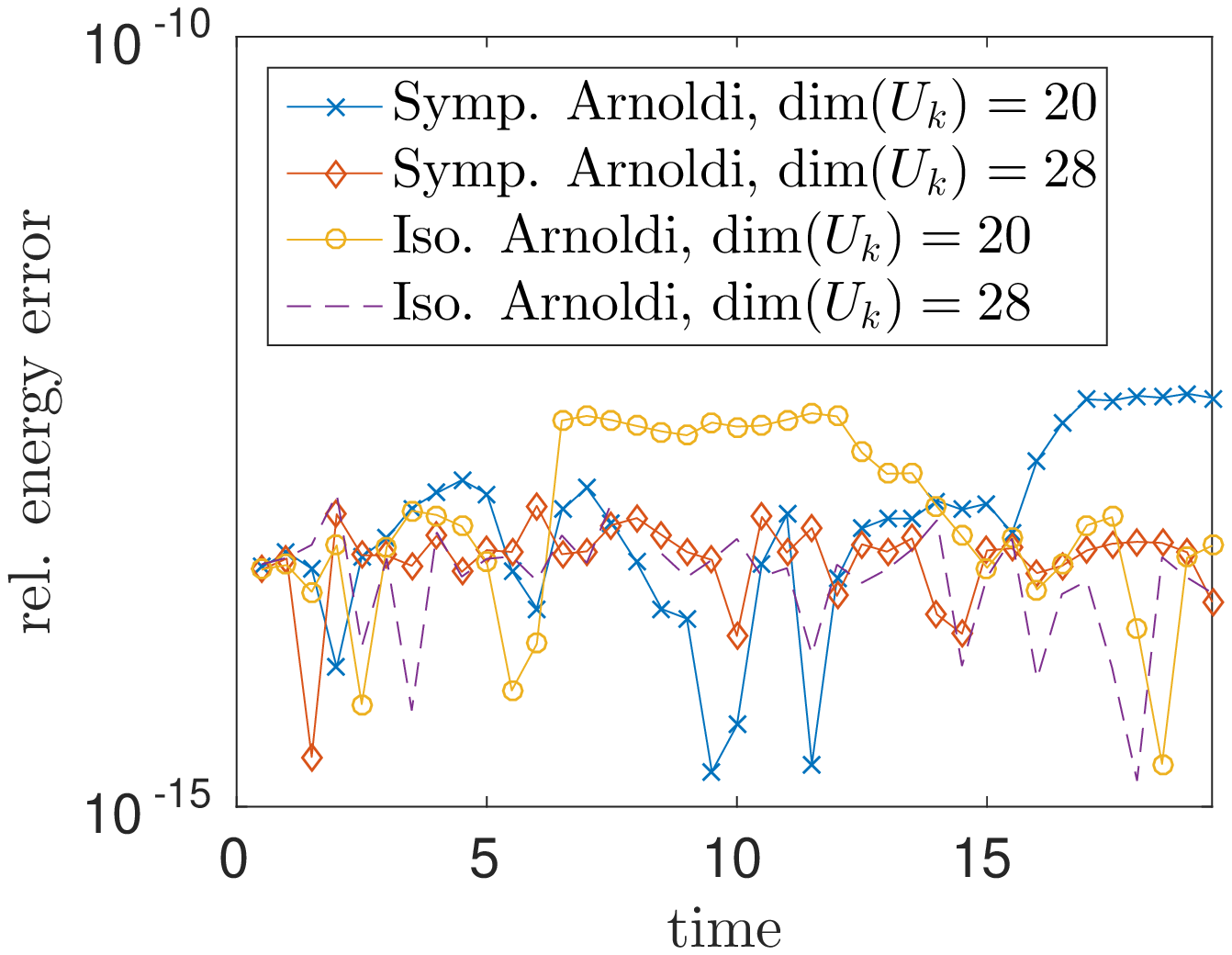}\hspace{-0mm}
\end{center}
\caption{Linear wave equation and energy errors when $U_k$ is produced by 
the symplectic Arnoldi iteration and the isotropic Arnoldi process. } 
\label{fig:linear_energies3}
\end{figure}

\newpage


\subsection{Nonlinear Schr\"odinger equation} \label{subsec:nls}


Consider next a one dimensional nonlinear Schrödinger equation (NLS) on $[-4\pi,4\pi]$ with periodic boundary conditions,
\begin{equation} \label{eq:nls_system}
  \begin{cases}
      i \partial_t \psi(x,t) &= - \frac{1}{2} \, \partial_{xx} \psi(x,t) + \abs{\psi(x,t)}^2\psi(x,t) \nonumber - V_0 \sin^2 (x) \psi(x,t) \\
      \psi(x,0) &= \psi_0(x), \,\,\,\,\,\, \textrm{ for all } x\in [-4\pi, 4\pi] \tag{6.1} \\
      \psi(-4\pi,t) &= \psi(4\pi,t), \,\,\,\,\, 
  \end{cases}
\end{equation}
(see~\cite{Bronski_et_al}). The initial value is given by
\begin{equation} \label{eq:init_nls}
\psi_0(x) = \sqrt{V_0 \, \sin^2(x) + B} \,\, \ee^{\ii \theta(x)},
\end{equation} 
where the phase function $\theta(x)$ satisfies
$$
\tan(\theta(x)) = \pm \sqrt{1 + V_0/B} \, \tan (x)
$$
(see Figure~\ref{fig:theta}).
We set $V_0 = B = 1.0$ which gives a stable soliton solution (see~\cite{Bronski_et_al}).  

\begin{figure}[h]
\begin{center}
\includegraphics[scale=0.5]{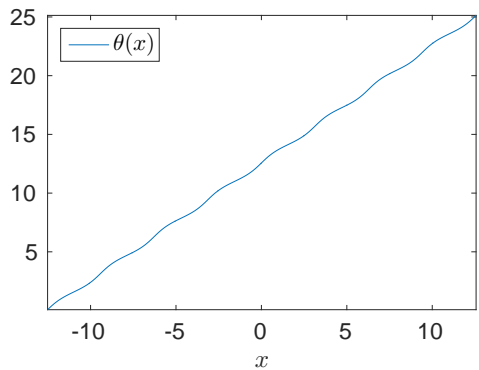}
\end{center}
\caption{The phase function $\theta(x)$ of initial data \eqref{eq:init_nls}.}
\label{fig:theta}
\end{figure}

The equation \eqref{eq:nls_system} can be derived from the energy functional
\begin{equation} \label{eq:energy_nls}
    E(\psi) = \int_{\mathbb{R}} \left[ \frac{1}{4} \abs{\psi_x}^2 + 
    \frac{1}{4} \abs{\psi}^4  - \frac{V_0}{2} \, \sin^2(x) \abs{\psi}^2 \right] \, \dd x .
\end{equation}
As in the example of Subsection~\ref{subsec:derivation}, we first carry out a spatial discretization 
on an equidistant grid with grid size $\Delta x = 8 \pi /n$ and denote by $q_i(t)$ and $p_i(t)$ 
the approximations to $\mathrm{Re} \, \psi(ih,t)$  and $\mathrm{Im} \, \psi(ih,t)$. 
Expressing these approximations as vectors
\begin{equation*}
    \q(t) = \begin{bmatrix}
		  q_1(t) \\ \vdots \\ q_n(t)
              \end{bmatrix},\quad \p(t) = \begin{bmatrix}	p_1(t) \\ \vdots \\ p_n(t) \end{bmatrix},
\end{equation*}
we get the discrete counterpart of the energy functional \eqref{eq:energy_nls},
\begin{equation} \label{eq:Hamiltonian_nls}
\begin{aligned}
    H \big(\p(t),\q(t) \big) &= - \frac{1}{4}\left( \q(t)^T \Delta_n \, \q(t) + \p(t)^T \Delta_n \, \p(t)  
     \right) + \frac{1}{4} \sum\limits_{i=1}^n \left( q_i(t)^2  + p_i(t)^2\right)^2 \\
    & \quad - \frac{V_0}{2} \sum\limits_{i=1}^n \sin^2(x_i) \left( q_i(t)^2 + p_i(t)^2 \right),
    \end{aligned}
\end{equation}
where $ \Delta_n$ is the discretized Laplacian \eqref{eq:laplacian_periodic}.
Setting $\x(t)= \smat{\q(t) \\ \p(t)}$, we get from \eqref{eq:Hamiltonian_nls} 
the Hamiltonian system \eqref{eq:Hamiltonian_system}, i.e.,
\begin{equation} \label{eq:eq1}
\begin{aligned}
  \begin{bmatrix} \q'(t) \\ \p'(t) \end{bmatrix}
  & = - \frac{1}{2} \mat J^{-1} \begin{bmatrix}  \Delta_n & 0 \\ 0 &  \Delta_n \end{bmatrix} \begin{bmatrix} \q(t) \\ \p(t) \end{bmatrix} 
  + \mat J^{-1}  \begin{bmatrix} \big(\q(t) \circ \q(t) + \p(t) \circ \p(t) \big) \circ \q(t) \\
  \big(\q(t) \circ \q(t) + \p(t) \circ \p(t) \big) \circ \p(t) \end{bmatrix} \\
   & \quad - \mat J^{-1} V_0  \begin{bmatrix} \sin^2(\x) \circ \q(t) \\ \sin^2(\x) \circ \p(t) \end{bmatrix},
  \end{aligned}
\end{equation}
where $\circ$ denotes the Hadamard product, $(\u \circ \v)_i = u_iv_i$, and $\sin^2(\x)_i = \sin^2(x_i).$

We set $n=500$ and integrate up to $T=40 \pi$ with step size $h=T/n_t$, where $n_t = 8000$.

The benefits obtained from the symmetry properties of EEMP (Lemma~\ref{lem:eemp_symmetry}) 
are illustrated by Figures~\ref{fig:NLS1} and ~\ref{fig:NLS2} which depict
the relative energy errors and solution errors given by the exponential Euler method and EEMP, when $\U_k \in \mathbb{R}^{n \times 20}$.
The nonsymmetric EE shows a linear growth in energy error and quadratic growth in solution error  whereas
EEMP gives a bounded energy error and a linear growth in solution error.

\begin{figure}[h]
\begin{center}
\includegraphics[scale=0.37]{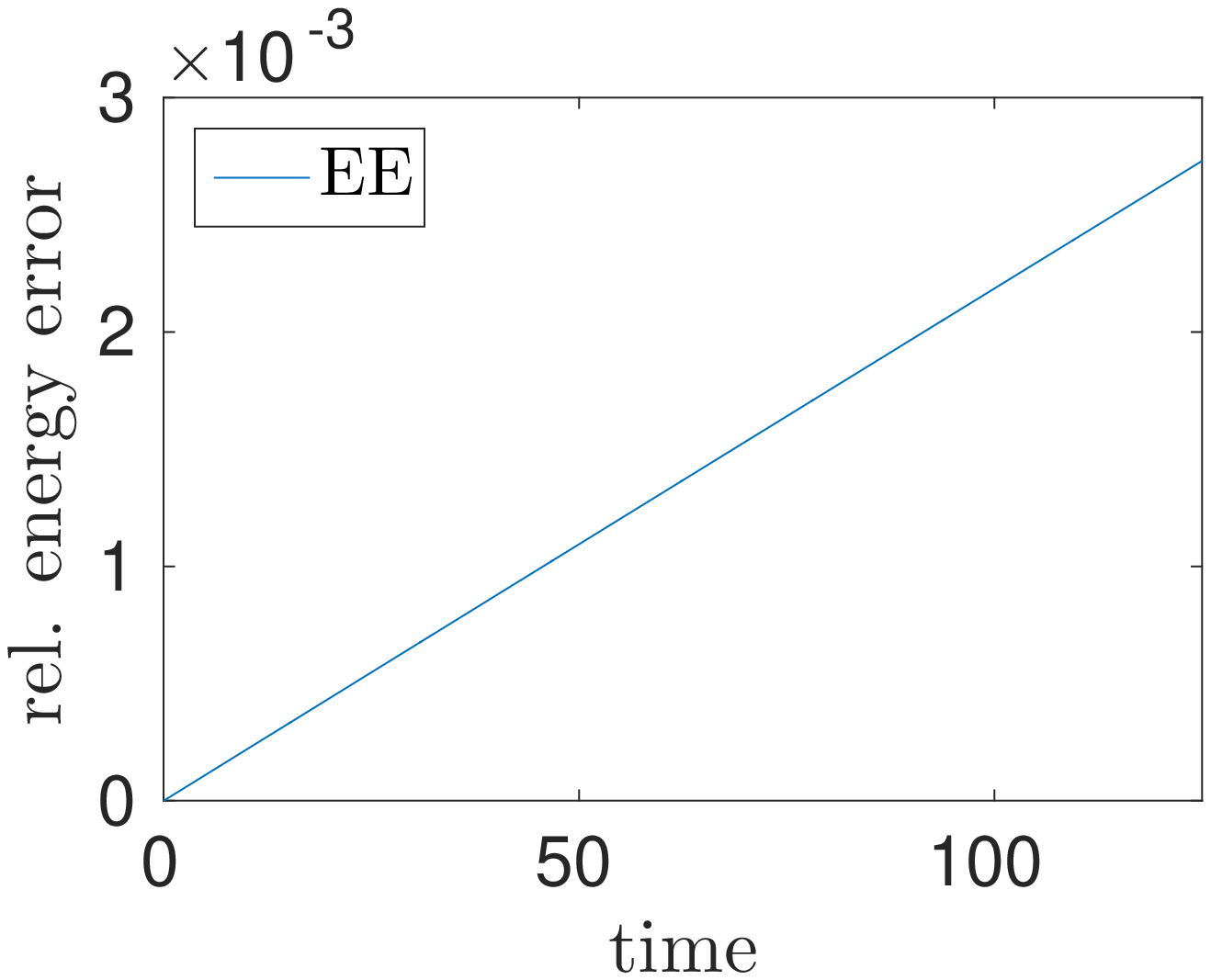}\hspace{-0mm}
\includegraphics[scale=0.37]{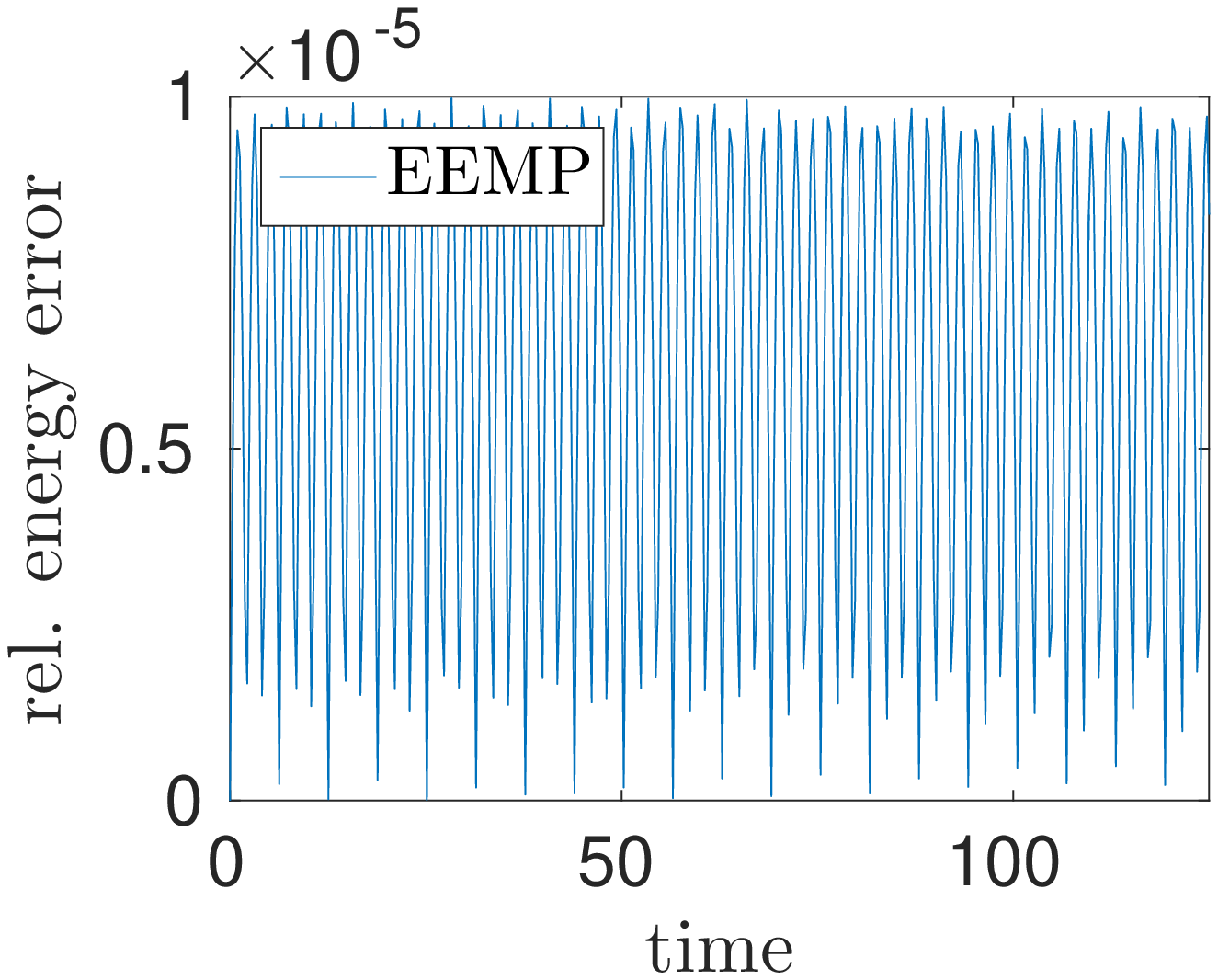}
\end{center}
\caption{NLS and energy errors for the exponential Euler method (EE) and EEMP, when $\U_k$ is produced by the Arnoldi iteration. 
Here $\dim(\U_k) = 20$. }
\label{fig:NLS2}
\end{figure}

\begin{figure}[h]
\begin{center}
\includegraphics[scale=0.5]{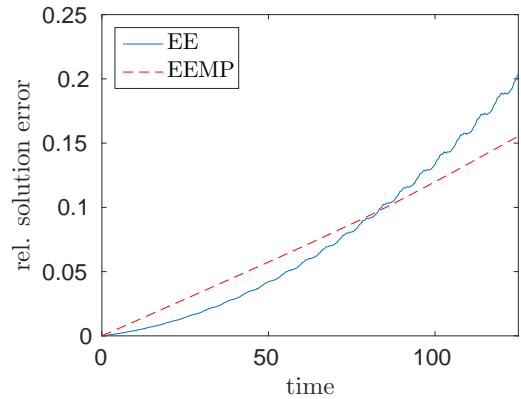}
\end{center}
\caption{NLS and solution errors for EE and EEMP, when $\U_k$ is produced by the Arnoldi iteration. Here $\dim(\U_k) = 20$.}
\label{fig:NLS1}
\end{figure}

Next we set $T=80 \pi$, $n_t = 10000$ and $h=T/n_t$, which implies that the norm of $h \, \H$ is now bigger
and thus larger dimension for $\U_k$ is required.
Differences resulting from the symplecticity of the basis $\U_k$ can be seen in Figure~\ref{fig:NLS_EIMP}
where we compare the IEMP when $\U_k \in \mathbb{R}^{2n \times 24}$ is given by the Arnoldi iteration and the Hamiltonian Lanczos process.
The Arnoldi iteration gives a growth of energy error whereas the Hamiltonian Lanczos iteration shows bounded energy error.

\begin{figure}[h]
\begin{center}
\includegraphics[scale=0.5]{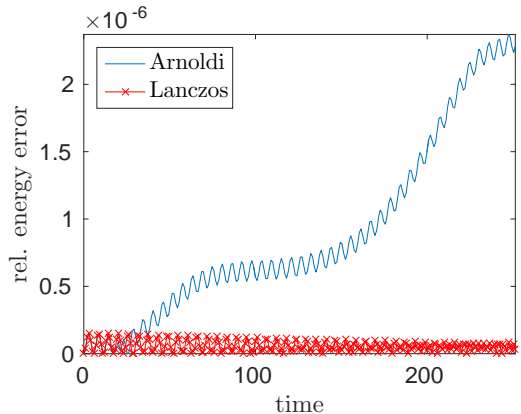}\hspace{-0mm}
\end{center}
\caption{NLS and energy error for IEMP when the basis $\U_k \in \mathbb{R}^{2n \times 24}$ is produced by the
Arnoldi and by the Hamiltonian Lanczos iteration.}
\label{fig:NLS_EIMP}
\end{figure}


\subsection{Nonlinear Klein--Gordon equation}


As a last numerical example we consider the nonlinear Klein--Gordon equation
with periodic boudnary conditions,
\begin{equation*} \label{eq:nls_system}
  \begin{aligned}
      \partial_{tt} u(x,t) &= \partial_{xx} u(x,t) - m^2 u(x,t) - g u(x,t)^3 \\
      u(x,0) &= u_0(x), \quad \quad \textrm{ for all } x\in [0,L]  \\
      u(0,t) &= u(L,t), 
  \end{aligned}
\end{equation*}
The equation is now of the form \eqref{eq:wave_equation} for $f(u) = m^2 u + g u^3$, and after spatial discretization
on the interval $[0,L]$ using finite differences with $n$ points we get a Hamiltonian system with the Hamiltonian \eqref{eq:discrete_Hamiltonian},
where $F(u) = \tfrac{m^2}{2} u^2 + \tfrac{g}{4} u^4$.

Consider the initial data (see~\cite[Example\;1]{Jimenez})
\begin{equation*}
\begin{aligned}
u(x,0) &= A\left(1 + \cos\big(\frac{2 \pi}{L} x \big) \right) \\
u_t(x,0) &= 0.
\end{aligned}
\end{equation*}
Set $A=1$, $m=0.5$, $L=1$, $g=1$. Take $n=400$ discretization points, and consider time integration up to $T=180$
with time step size $h = \frac{T}{n_t}$, $n_t = 9000$.

Here applying EEMP using the basis $\U_k$ generated by the Arnoldi iteration results in an unstable method.
However, the Hamiltonian Lanczos process gives a stable alternative. Figures~\ref{fig:Klein1} show the relative
energy and solution errors for EEMP combined with the Hamiltonian Lanczos process, and for EE combined with the
Arnoldi iteration. For the energy error the symmetric EEMP with symplectic basis $\U_k$ gives a bounded energy error
and also a smaller solution error than EE with orthogonal basis given by the Arnoldi iteration.

\begin{figure}[h!]
\begin{center}
\includegraphics[scale=0.35]{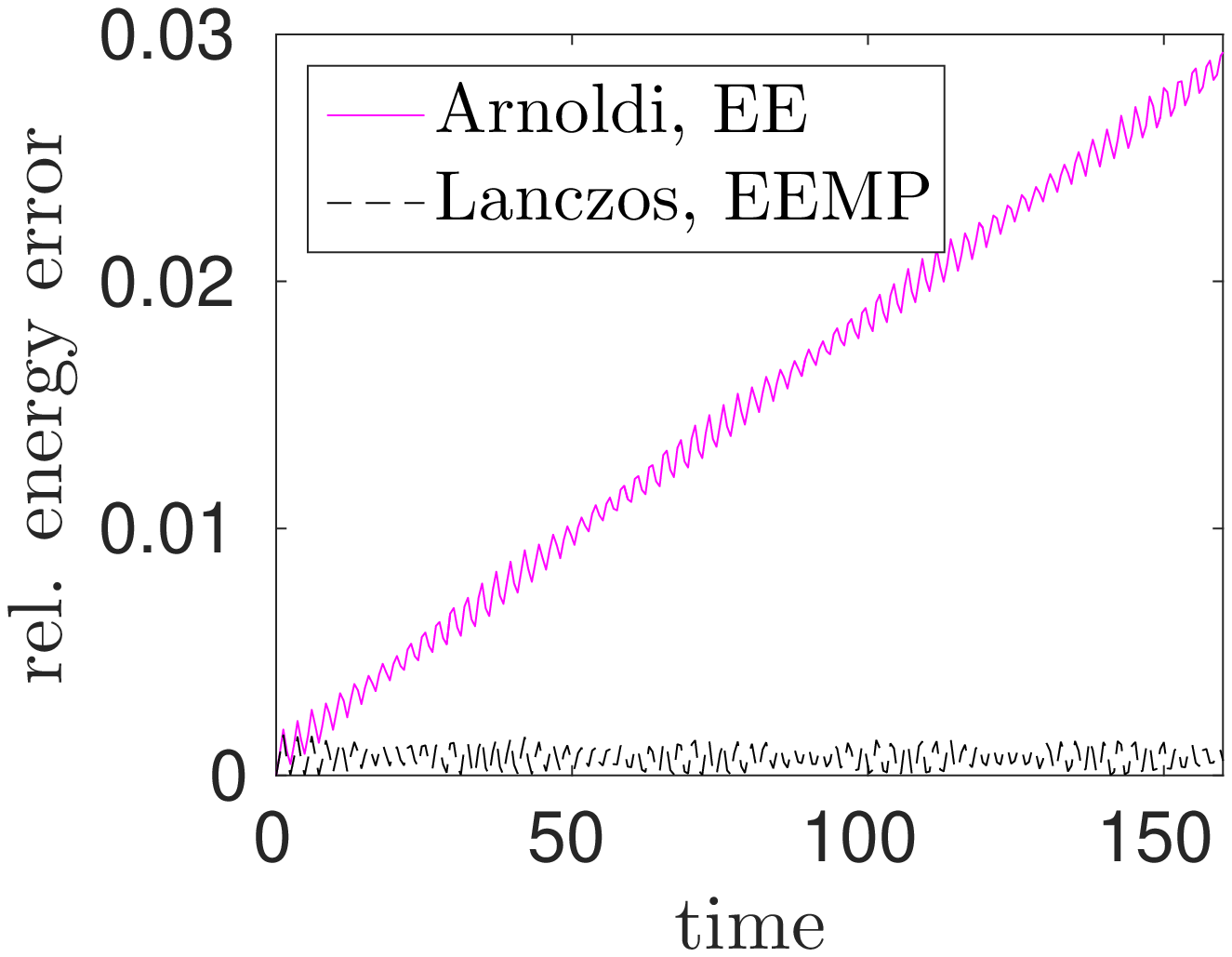}\hspace{-0mm}
\includegraphics[scale=0.35]{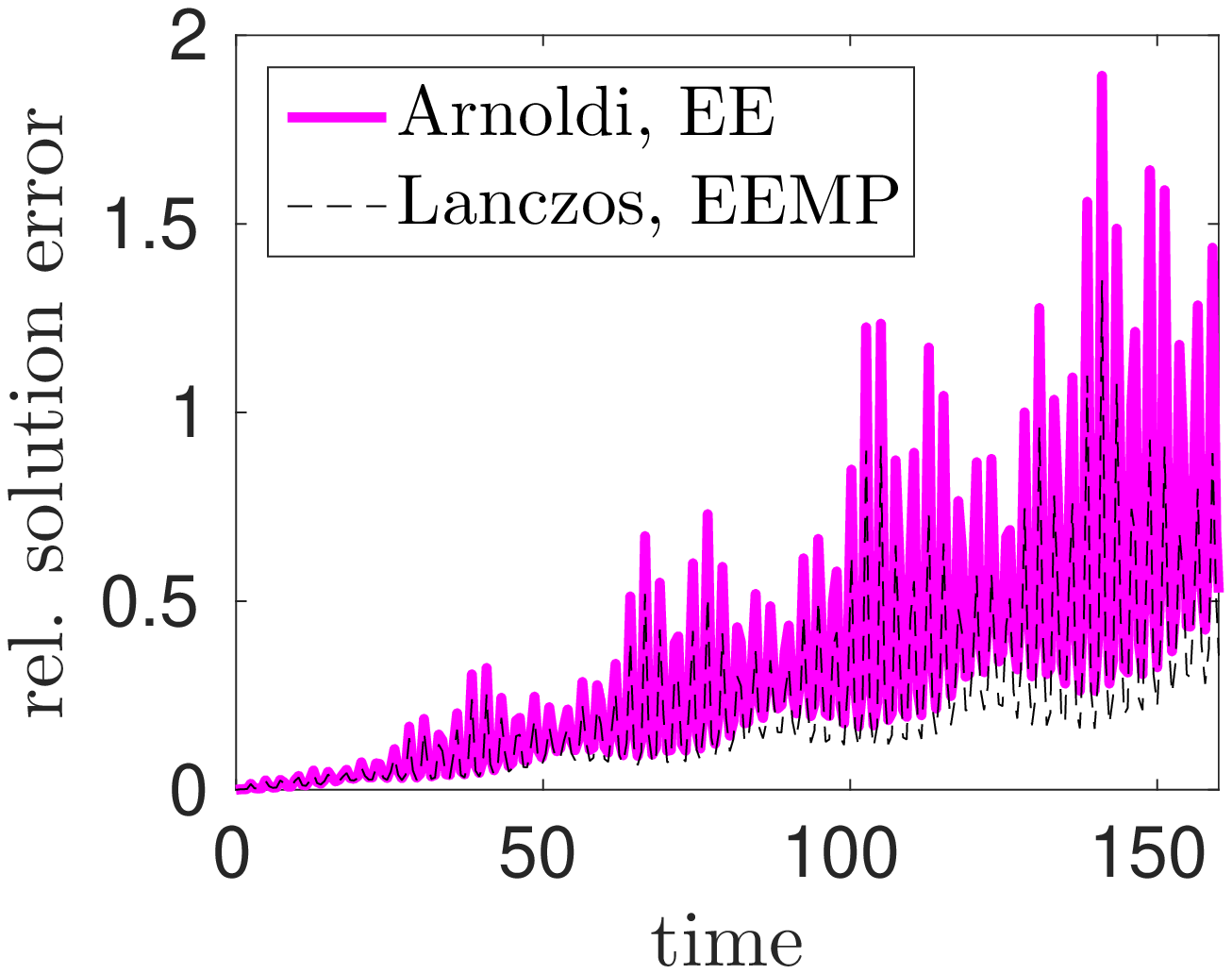}
\end{center}
\caption{Klein-Gordon equation and the energy (left) and the solution (right) errors for EE and EEMP 
when the basis $\U_k \in \mathbb{R}^{2n \times 20}$ is resp. given by the Arnoldi and by the Hamiltonian Lanczos iteration. }
\label{fig:Klein1}
\end{figure}

When applying IEMP, the effect of the symplecticity of $\U_k$ shows up.
Figure~\ref{fig:Klein2} shows the relative energy errors when $\U_k \in \mathbb{R}^{2n \times 22}$
is produced using the Arnoldi iteration and the Hamiltonian Lanczos process.

\begin{figure}[h]
\begin{center}
\includegraphics[scale=0.5]{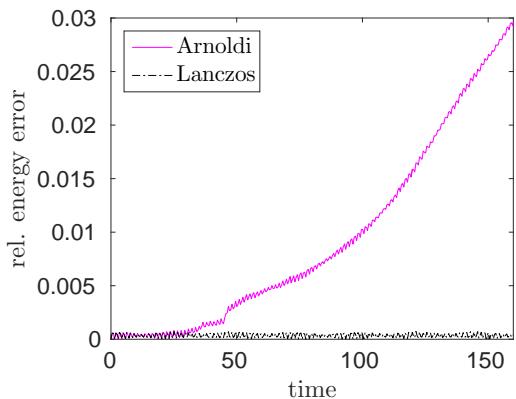}\hspace{-0mm}
\end{center}
\caption{Klein-Gordon equation and the relative energy errors for IEMP,
when the basis $\U_k \in \mathbb{R}^{2n \times 22}$ is 
produced using the Arnoldi iteration and the Hamiltonian Lanczos process.}
\label{fig:Klein2}
\end{figure}

\section{Conclusions and outlook}

The theoretical background of this numerical exploration was the following.
By backward error analysis (see~\cite[Ch.\;IX]{Hairer_Lubich_Wanner}) it can be shown that
applying a symplectic integrator to an integrable Hamiltonian systems gives a
symplectic map $\,\x_j\to\x_{j+1}\,$ and also
\[(\#)\qquad
\begin{cases}
  \ \text{small error in energy uniformly for long time},\\
  \ \text{error growth linear in }t\,
\end{cases}\] 
\null\qquad for exponentially long times (see~\cite[Ch.\;X]{Hairer_Lubich_Wanner}).
Behaviour $\,(\#)\,$ can also be shown for symmetric time integrators when applied to integrable Hamiltonian systems 
(see~\cite[Ch.\;XI]{Hairer_Lubich_Wanner}).

\medskip

Here we have given exponential integration methods which give symplectic maps when applied to
linear Hamiltonian systems.
When using the approximations to nonlinear systems the resulting maps are not symplectic, 
but $\,(\#)\,$ can anyway be observed numerically.
This is escpecially true when using the exponential explicit midpoint rule
\[ \x_{j+1}=\x_j+\U\,e^{h\F}\,\U^\dagger(\x_{j-1}-\x_j)+
2h\,\U\,\phi(h\F)\,\U^\dagger\f(\x_j)\ ,\]
in the symmetric way: the range of $\,\U\,$ contains
\[\x_{j-1}-\x_j, \,\, \f(\x_j), \, \H\f(\x_j), \, \dots, \,\H^{k-1}\f(\x_j)\ .\]
Then $\,\x_{j-1}\,$ is obtained from $\,\x_j\,$ and $\,\x_{j+1}\,$ from the same formula
by changing $\,h\,$ to $\,-h\,$. 
The effect of symplecticity of $\,\U\,$ can be seen numerically when applying the method IEMP (Subsection~\ref{subsec:IEMP})
to nonlinear Hamiltonian problems (see e.g. Figure~\ref{fig:Klein2}).

The numerical experiments clearly show that both preserving the Hamiltonian structure and the time symmetry are important
when applying exponential integrators with Krylov approximations to large scale Hamiltonian systems.
The Hamiltonian Lanczos method appears to be the most efficient method to produce a symplectic basis $\U_k$
among those alternatives that provide the needed Krylov subspace of a given dimension.
However, further study is needed to find an iteration with short $\omega$-orthogonalization recursions
that is more efficient and numerically stable for approximation of the $\phi$ functions.


\begin{thebibliography}{}
%
%

\bibitem{Benner_Fassbender}
P.~Benner and H.~Fassbender,
An implicitly restarted symplectic Lanczos method for the Hamiltonian eigenvalue problem,
Linear Algebra Appl. vol. 263, pp.~75--111 (1997)

\bibitem{Bronski_et_al}
J.C.~Bronski, L.D.~Carr, B.~Deconinck and J.N.~Kutz,
Bose-Einstein condensates in standing waves: The cubic nonlinear Schr\"odinger equation with a periodic potential. 
Phys. Rev. Lett., vol. 86.8 (2001)


\bibitem{Celledoni_et_al}
E.~Celledoni, D.~Cohen and B.~Owren,
Symmetric Exponential Integrators with an Application to the Cubic Schr\"odinger Equation,
Found. Comput. Math., vol. 8, pp.~303--317 (2008)

\bibitem{Celledoni_et_al2}
E.~Celledoni, V.~Grimm, R.I.~McLachlan, D.I.~McLaren, D.~O'Neale, B.~Owren and G.R.W~Quispel,
Preserving energy resp. dissipation in numerical PDEs using the "Average Vector Field" method,
J. Comput. Phys., vol. 231(20), pp.~6770--6789 (2012)


\bibitem{Druskin_Knizhnerman}
V.L.~Druskin and L.A.~Knizhnerman,
Two polynomial methods of calculating functions of symmetric matrices,
USSR Comput. Math. Math. Phys., 29 (1989), pp.~112--121.


\bibitem{Duncan}
D.B.~Duncan,
Symplectic Finite Difference Approximations of the Nonlinear Klein-Gordon Equation, 
SIAM J. Numer. Anal., vol. 34, pp.~1742--1760 (1997)


\bibitem{Eirola}
T.~Eirola, Krylov integrators for Hamiltonian systems, 
Workshop on Exponential Integrators, October 20-23, 2004 Innsbruck, Austria, 
available at \\ \texttt{http://techmath.uibk.ac.at/numbau/alex/events/files04/slides/timo.pdf}

\bibitem{Eirola2}
T.~Eirola, M.~Huhtanen and J. von Pfaler, 
Solution Methods for $\mathbb{R}$-Linear Problems in $\mathbb{C}^n$, 
SIMAX vol.25.3, pp.~804--828 (2003).

\bibitem{Eirola3}
T.~Eirola and J.M.~Sanz--Serna,
Conservation of integrals and symplectic structure in the integration of differential equations by multistep methods,
Numer. Math. vol. 61, pp.~281--290 (1992)




\bibitem{Gallopoulos_Saad}
E.~Gallopoulos and Y.~Saad,
Efficient solution of parabolic equations by Krylov approximation methods,
SIAM J. Sci. Statist. Comput., 13 (1992), pp.~1236--1264.


\bibitem{Hairer_Lubich_Wanner}
E.~Hairer, C.~Lubich and G.~Wanner, 
Geometric Numerical Integration: Structure Preserving Algorithms for Ordinary Differential Equations, 2nd ed.,
Springer Series in Computational Mathematics, vol 31, Springer, Berlin (2006)


\bibitem{Hochbruck_Lubich}
M.~Hochbruck and C.~Lubich, 
On {K}rylov subspace approximations to the matrix exponential operator, 
SIAM J. Numer. Anal., 34 (1997), pp.~1911--1925.


\bibitem{Hochbruck_Lubich_Selhofer}
M.~Hochbruck, C.~Lubich and H.~Selhofer, 
Exponential Integrators for Large Systems of Differential Equations, 
SIAM J. Sci. Comp., vol. 19, pp.~1552--1574 (1998)


\bibitem{Hochbruck_Ostermann}
M.~Hochbruck and A.~Ostermann,
Exponential integrators,
Acta Numerica, vol. 19, pp.~209--286 (2010)


\bibitem{Jimenez}
S.~Jimenez and L.Vazquez,
Analysis of four numerical schemes for a nonlinear Klein-Gordon equation, 
Appl. Math. Comput vol. 35.1, pp.~61--94 (1990)


\bibitem{Koskela}
A. Koskela,
Structure preserving Krylov integrators for Hamiltonian systems,
Master's thesis, Aalto University, Finland, 2010.



\bibitem{Lubich}
C.~Lubich, 
A variational splitting integrator for quantum molecular dynamics,
Appl. Numer. Math., vol. 48, pp.~355--368 (2008)



\bibitem{Mehrmann_Watkins}
V.~Mehrmann and D.~Watkins,  
Structure-preserving methods for computing eigenpairs of large sparse skew-Hamiltonian/Hamiltonian pencils, 
SIAM J. Sci. Comp., vol. 22, pp.~1905--1925 (2001).

%


\bibitem{Saad}
Y.~Saad,
Analysis of some Krylov subspace approximations to the matrix exponential operator,
SIAM J. Numer. Anal. 29 (1992)


\bibitem{Watkins}
D.S.~Watkins, 
On Hamiltonian and symplectic Lanczos processes, 
Linear Algebra Appl.  385, 23--45 (2004)



\end{thebibliography}
\end{document}